\documentclass[letterpaper,12pt,oneside,headsepline,DIV=10]{scrartcl}

\usepackage{etoolbox}
\usepackage{ifdraft}
\usepackage{iftex}

\ifluatex
\usepackage[utf8]{luainputenc}
\else
\usepackage[utf8]{inputenc}
\fi
\usepackage[T1]{fontenc}

\usepackage[english]{babel}

\usepackage{lmodern}
\usepackage{fourier}

\usepackage{amsfonts}
\usepackage{mathrsfs} %
\usepackage{dsfont}

\usepackage{amssymb}

\usepackage{amsmath}
\usepackage{mathtools}
\usepackage[thmmarks, amsmath, amsthm]{ntheorem}
\usepackage{nccmath}    %
\usepackage{tensor}

\usepackage[draft=false]{scrlayer-scrpage}
\usepackage{needspace}

\usepackage[backend=biber,style=numeric-comp,giveninits,url=false,sortcites=true,maxbibnames=99]{biblatex}
\mathtoolsset{showonlyrefs}

\usepackage[final,
            pdfborder={0 0 0}, colorlinks=true,
            linkcolor=BrickRed, citecolor=ForestGreen, urlcolor=RoyalBlue]{hyperref}

\usepackage[cmyk,dvipsnames]{xcolor}

\ifdraft{%
\usepackage[textsize=scriptsize,colorinlistoftodos]{todonotes}
\usepackage{lineno}
\usepackage{scrtime}
}{}

\usepackage{booktabs}
\usepackage[inline]{enumitem}
\usepackage{lettrine}
\usepackage[final]{listings}
\usepackage{url}

\ifdraft{%
\linenumbers
\newcommand*\patchAmsMathEnvironmentForLineno[1]{%
  \expandafter\let\csname old#1\expandafter\endcsname\csname #1\endcsname
  \expandafter\let\csname oldend#1\expandafter\endcsname\csname end#1\endcsname
  \renewenvironment{#1}%
     {\linenomath\csname old#1\endcsname}%
     {\csname oldend#1\endcsname\endlinenomath}}%
\newcommand*\patchBothAmsMathEnvironmentsForLineno[1]{%
  \patchAmsMathEnvironmentForLineno{#1}%
  \patchAmsMathEnvironmentForLineno{#1*}}%
\AtBeginDocument{%
\patchBothAmsMathEnvironmentsForLineno{equation}%
\patchBothAmsMathEnvironmentsForLineno{align}%
\patchBothAmsMathEnvironmentsForLineno{flalign}%
\patchBothAmsMathEnvironmentsForLineno{alignat}%
\patchBothAmsMathEnvironmentsForLineno{gather}%
\patchBothAmsMathEnvironmentsForLineno{multline}%
}}

\clearmainofpairofpagestyles
\automark[section]{subsection}

\renewcommand{\subsectionmark}[1]{}

\lohead{{\small\normalfont \headertitle}}
\rohead{{\small \headerauthors}}

\RedeclareSectionCommand[%
  font=\Large\sffamily\bfseries,%
  beforeskip=1\baselineskip,%
  afterskip=0.5\baselineskip,%
  indent=0em,%
  afterindent=false,%
  tocbeforeskip=0.3\baselineskip plus 1pt minus 1pt%
  ]{section}

\RedeclareSectionCommands[%
  font=\normalfont\bfseries,%
  beforeskip=3pt,%
  afterskip=-1em%
  ]{subsection,subsubsection}

\RedeclareSectionCommands[%
  font=\normalfont\itshape,%
  beforeskip=1pt,%
  afterskip=-1em,%
  indent=0pt,%
  ]{paragraph}

\AtEveryBibitem{\clearfield{doi}}
\AtEveryBibitem{\clearfield{isbn}}
\AtEveryBibitem{\clearfield{issn}}
\AtEveryBibitem{\clearfield{pages}}
\AtEveryBibitem{\clearlist{language}}

\renewcommand*{\bibfont}{\footnotesize}
\renewbibmacro*{in:}{}
\DeclareFieldFormat
  [article,inbook,incollection,inproceedings,patent,thesis,unpublished,misc]
  {title}{#1}

\setitemize{itemsep=0.02\baselineskip}

\newenvironment{enumeratearabic}{
\begin{enumerate}[label=(\arabic*),%
  leftmargin=2.5em,itemindent=0pt,%
  labelindent=.5em,labelwidth=1.5em,labelsep=!,%
  noitemsep]
}{
\end{enumerate}
}

\newenvironment{enumeratearabic*}{
\begin{enumerate*}[label=(\arabic*)] %
}{
\end{enumerate*}
}

\newenvironment{enumerateroman*}{
\begin{enumerate*}[label=(\roman*)] %
}{
\end{enumerate*}
}

\numberwithin{equation}{section}

\theoremnumbering{arabic}
\newtheorem{theoremcounter}{theoremcounter}[section]
\theoremnumbering{Alph}
\newtheorem{maintheoremcounter}{maintheoremcounter}

\theoremstyle{plain}

\newtheorem{lemma}[theoremcounter]{Lemma}

\newtheorem{proposition}[theoremcounter]{Proposition}
\newtheorem{theorem}[theoremcounter]{Theorem}

\theoremstyle{plain}

\newtheorem{maincorollary}[maintheoremcounter]{Corollary}
\newtheorem{maintheorem}[maintheoremcounter]{Theorem}

\theoremstyle{definition}

\theoremstyle{remark}

\newtheorem{remark}[theoremcounter]{Remark}

\theoremstyle{nonumberremark}

\newtheorem{mainremark}{Remark}

\newenvironment{mainremarkenumerate}
{%
\mainremark
\enumeratearabic
}{%
\endenumeratearabic
\endmainremark
}%

\newcommand{\tx}{\text}
\newcommand{\nbd}{\nobreakdash-\hspace{0pt}}

\newcommand{\texpdf}[2]{\texorpdfstring{#1}{#2}}

\makeatletter
\newcommand{\writelabel}[1]{#1\def\@currentlabel{#1}}
\makeatother

\newcommand{\minwidthmathbox}[2]{%
  \mathmakebox[{\ifdim#1<\width\width\else#1\fi}]{#2}%
}

\newcommand{\tbf}{\bfseries}

\newcommand{\cM}{\ensuremath{\mathcal{M}}}

\newcommand{\rmd}{\ensuremath{\mathrm{d}}}

\newcommand{\rmM}{\ensuremath{\mathrm{M}}}

\newcommand{\rmU}{\ensuremath{\mathrm{U}}}

\newcommand{\td}{\tilde}
\newcommand{\wtd}{\widetilde}
\newcommand{\ov}{\overline}

\newcommand{\wht}{\widehat}

\newcommand{\defeq}{\mathrel{:=}}

\newcommand{\condsep}{\mathrel{\;:\;}}

\newcommand{\mrelspace}[1]{\mathrel{\mspace{#1}}}

\NewCommandCopy{\rightarroworig}{\rightarrow}
\renewcommand{\rightarrow}
  {\protect\relbar\mrelspace{-9.7mu}\rightarroworig}

\NewCommandCopy{\leftarroworig}{\leftarrow}
\renewcommand{\leftarrow}
  {\protect\leftarroworig\mrelspace{-9.7mu}\relbar}

\newcommand{\ra}{\rightarrow}

\newcommand{\isdiv}{\mathrel{\mid}}
\newcommand{\nisdiv}{\mathrel{\nmid}}

\renewcommand{\pmod}[1]{\;(\mathrm{mod}\, #1)}

\newcommand{\ZZ}{\ensuremath{\mathbb{Z}}}

\newcommand{\U}[1]{\ensuremath{\mathrm{U}_{#1}}}

\newcommand{\Bigisdiv}{\mathrel{\Big|}}
\newcommand{\Bigisdivexact}{\mathrel{\Big\|}}

\newcommand{\headertitle}{{%
Theta Cycles of Modular Forms Modulo~$p^2$
}}
\newcommand{\headerauthors}{%
  S.~Ahlgren,
  M.~Raum,
  O.~K.~Richter%
}
\title{%
  Theta Cycles of Modular Forms Modulo~$p^2$
}
\author{%
  Scott Ahlgren%
  \thanks{The author was partially supported by a grant from the Simons Foundation (\#963004 to Scott Ahlgren).}
  \and%
  Martin Raum%
  \thanks{The author was partially supported by Vetenskapsr\aa det Grant~2023-04217.}%
  \and%
  Olav K. Richter%
  \thanks{The author was partially supported by a grant from the Simons Foundation (\#835652 to Olav Richter).}
}
\ifdraft{\date{\today\ at\ \thistime}}{\date{}}

\begin{document}

\thispagestyle{scrplain}
\begingroup
\deffootnote[1em]{1.5em}{1em}{\thefootnotemark}
\maketitle
\endgroup

\vspace{-\baselineskip}

\begin{abstract}
\small
\noindent
{\tbf Abstract:}
The theta cycle of a modular form modulo a prime~$p\geq 5$ is well understood. By contrast, the theta cycle modulo a power of~$p$ is still mysterious and experimentally erratic. Here we completely determine the theta cycle of a weight~$k < p$ modular form modulo~$p^2$ on the initial segment of length~$p$ and we prove exact values or nontrivial bounds for the weight filtrations on~$p-2$  further segments of length~$p - k + 1$. In particular, asymptotically as~$p \ra \infty$ we establish~50\% of the theta cycle exactly, and we provide nontrivial bounds for~100\% of it. We determine the first two low points exactly and~$\left\lfloor \frac{p-k+1}{2} \right\rfloor$ further low points at regular positions. Moreover, we detect low points at \emph{exceptional positions} which solve a quadratic equation modulo~$p$, and which disturb the otherwise regular structure in the segments that we exhibit.
\\[.3\baselineskip]
\noindent
\textsf{\textbf{%
  theta operator%
}}%
\noindent
\ {\tiny$\blacksquare$}\ %
\textsf{\textbf{%
  theta cycle%
}}%
\noindent
\ {\tiny$\blacksquare$}\ %
\textsf{\textbf{%
  low points%
}}
\\[.2\baselineskip]
\noindent
\textsf{\textbf{%
  MSC Primary: 11F33%
}}%
\ {\tiny$\blacksquare$}\ %
\textsf{\textbf{%
  MSC Secondary: 11F11%
}}
\end{abstract}

\Needspace*{4em}
\phantomsection
\label{sec:introduction}
\addcontentsline{toc}{section}{Introduction}
\markright{Introduction}

\lettrine[lines=2,nindent=.2em]{\tbf T}{\,hroughout}%
, let $p \ge 5$ be a prime.  Given a quasi-modular form~$f$ on $\operatorname{SL}_2(\ZZ)$ with $p$\nbd{}integral rational coefficients and an integer $m\geq 1$,  let $\omega_{p^m}(f)$ be the weight filtration of $f$ modulo~$p^m$ as defined in~\eqref{eq:def:weight_filtration}. Iteration of the theta operator~$\theta$ (defined in~\eqref{eq:def:theta_operator}) yields the extended \emph{theta cycle of~$f$ modulo~$p^m$}:
\begin{gather*}
  \Omega_{p^m} f
\defeq
  \Big(
  \omega_{p^m}\bigl( f\bigr),\,
  \omega_{p^m}\bigl(\theta^{1} f\bigr),\,
  \;\ldots\;,\,
  \omega_{p^m}\bigl(\theta^{(p-1)p^{m-1} + m-1} f\bigr)
  \Big)
\tx{,}
\end{gather*}
and the periodic subsequence starting with the weight filtration of~$\theta^m f$ is called the (Tate) \emph{theta cycle}. In the case~$m = 1$, theta cycles of modular forms are completely understood. As an example of a central application which leverages this detailed understanding we mention Edixhoven's work~\cite{edixhoven-1992}  on the weight in Serre's conjecture on modular forms.
In a different direction, this understanding leads to the classification~\cite{ahlgren-boylan-2003}  of Ramanujan congruences for the partition function.

An integer~$i$, or by extension the quasi-modular form~$\theta^i f $, is a \emph{low point} of the cycle if
\begin{gather*}
  \omega_{p^m}\big( \theta^{i-1} f \big)
>
  \omega_{p^m}\big( \theta^{i} f \big)
\quad\tx{and}\quad
  \omega_{p^m}\big( \theta^{i}f \big)
<
  \omega_{p^m}\big( \theta^{i+1}f \big)
\tx{.}
\end{gather*}
Further, we say that we have a \emph{rise} (respectively a \emph{fall}) at $i$ if
\begin{gather*}
  \omega_{p^m}\big( \theta^{i}f \big)
<
  \omega_{p^m}\big( \theta^{i+1}f \big)
  \qquad \text{\Big(respectively}
\quad
  \omega_{p^m}\big( \theta^{i}f \big)
>
  \omega_{p^m}\big( \theta^{i+1}f \big)
\tx{\Big).}
\end{gather*}
The theta cycles modulo~$p$ are determined by the positions and weight filtrations of their low points (Jochnowitz~\cite{jochnowitz-1982a}).
There are either one or two low points in a theta cycle modulo~$p$ and the remaining part of~$\Omega_p(f)$ is highly regular, as illustrated on the left in Figure~\ref{fig:k12_mod17_mod17sq}.
The behavior of the theta cycle modulo~$p^2$ is by contrast more intricate and seemingly erratic, as illustrated on the right.
Indeed, only partial results on the isolated points $i = 1$ (Chen--Kiming~\cite{chen-kiming-2016}) and $i \in p \ZZ$ or~$i \in p-k+1 + p\ZZ$ (Kim-Lee \cite{kim-lee-2023}) are available. In particular, not a single location of a low point is known.

As one example of the complicated structure which appears, we mention that the theta cycle modulo~$p^2$ can have successive falls; this is not possible in the cycle modulo~$p$.
An example of this phenomenon occurs for the normalized
weight $12$ cusp form $\Delta$ when $p=13$: we have
\begin{gather*}
  \omega_{13^2}\bigl( \theta^{133}\, \Delta \bigr) = 434
  \tx{,}\quad
  \omega_{13^2}\bigl( \theta^{134}\, \Delta \bigr) = 280
  \tx{,}\quad \tx{and} \quad
  \omega_{13^2}\bigl( \theta^{135}\, \Delta \bigr) = 126
  \tx{.}
\end{gather*}

\begin{figure}[ht]
\includegraphics[draft=false,width=0.49\linewidth]{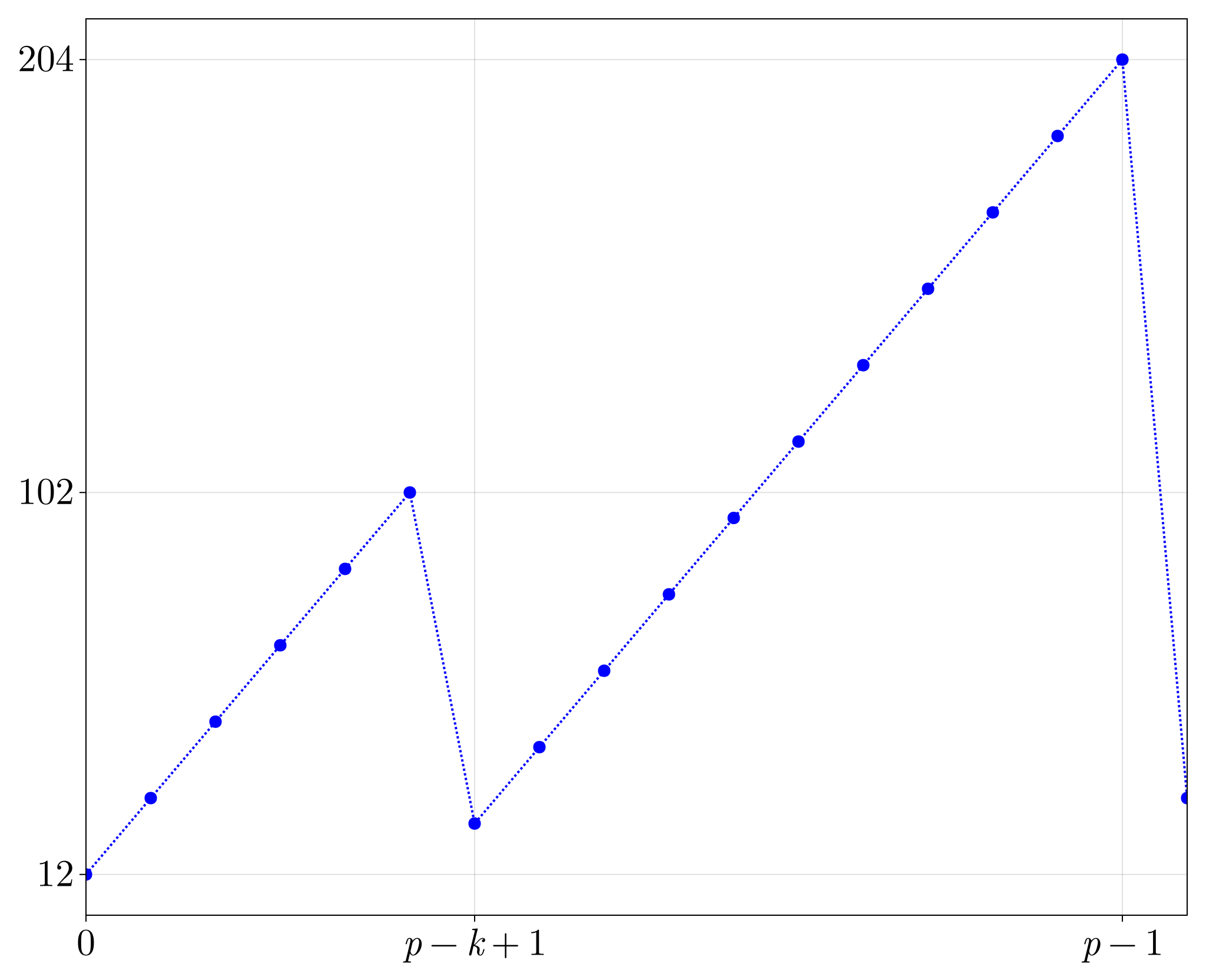}
\hfill
\includegraphics[draft=false,width=0.49\linewidth]{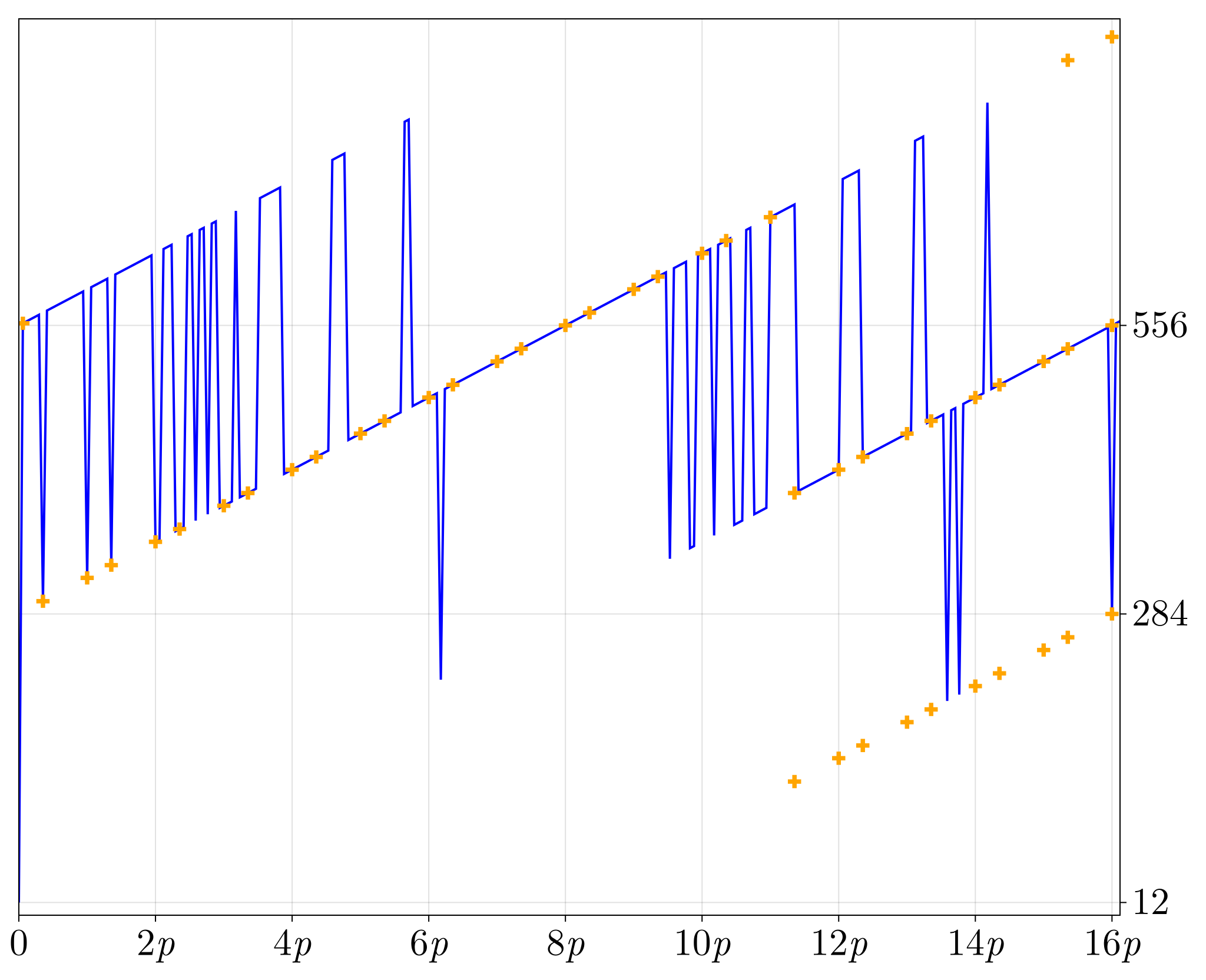}
\caption{
The weight filtrations modulo~$p = 17$ (on the left) and~$p^2$ (on the right) of~$\theta^i \Delta $, where~$\Delta$ is the normalized cusp form of weight~$k = 12$. Filtration values modulo~$p$ given for~$0 \le i \le p$ on the $x$-axis are connected by a dashed line. Filtrations modulo~$p^2$ given for~$0 \le i \le p (p-1)$ are represented by a blue line connecting the values. Orange crosses indicate previously known values (in some cases not uniquely determined) for~$i = 1$ \cite{chen-kiming-2016},
$i \in p \ZZ$ \cite{kim-lee-2023}, and~$i \in p - k + 1 + p \ZZ$ \cite{kim-lee-2023}.
}
\label{fig:k12_mod17_mod17sq}
\end{figure}

Here we determine exact values for weight filtrations in long segments of theta cycles modulo~$p^2$.  Our first result provides the exact weight filtration for all~$\theta^if$ with~$0 \le i \le p$.
A closer inspection of Figure~\ref{fig:k12_mod17_mod17sq} reveals the alignment of the first two low points in the theta cycles modulo~$p$ and~$p^2$.
Our theorems in particular imply that these low points always occur
and that they are the only ones in this range.

To state the result recall that~$p\geq 5$ and denote by $\rmM_k$ the space of weight $k$ modular forms on $\operatorname{SL}_2(\ZZ)$ with
$p$\nbd{}integral rational Fourier coefficients (recall that $\rmM_k=\{0\}$ unless $k=0$ or $k\geq 4$  is even).
We will often assume that $f\in \rmM_k$ with~$0<k<p$ has $\omega_p(f)=k$.  This assumption simply ensures that $f$ is not a constant multiple of the Eisenstein series $E_{p-1}\equiv 1\pmod p$.

\Needspace*{5\baselineskip}
\begin{maintheorem}
\label{mainthm:first_p_interval_weight_filtration}
Suppose that $f\in \rmM_k$ with~$0<k<p$ has $\omega_p(f)=k$. Then we have the following exact weight filtrations:
\begin{alignat*}{2}
  \omega_{p^2}\big( \theta^i f \big)
&=
  k
\tx{,}\quad
&&
  \tx{if\/ } i = 0
\tx{;}
\\
  \omega_{p^2}\big( \theta^i f \big)
&=
  k + 2i + 2p(p-1)
\tx{,}\quad
&&
  \tx{if\/ } 0 < i < p - k + 1
\tx{;}
\\
  \omega_{p^2}\big( \theta^i\, f \big)
&=
  k + 2i + p(p-1)
\tx{,}\quad
&&
  \tx{if\/ } i = p - k + 1
\tx{;}
\\
  \omega_{p^2}\big( \theta^i\, f \big)
&=
  k + 2i + 2p(p-1)
\tx{,}\quad
&&
  \tx{if\/ } p - k + 1 < i < p
\tx{;}
\\
  \omega_{p^2}\big( \theta^i\, f \big)
&=
  k + 2 i + p (p-1)
\tx{,}\quad
&&
  \tx{if\/ } i = p
\tx{.}
\end{alignat*}
\end{maintheorem}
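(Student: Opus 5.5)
The idea is to write $\theta^i f$ explicitly as a quasi-modular expression and then read off its filtration modulo~$p^2$ through standard facts about $E_{p-1}$ and $E_2$ modulo~$p^2$. Throughout, $\theta f = \partial f + \frac{k}{12}E_2 f$, where $\partial$ is the Serre derivative, so $\partial f \in \rmM_{k+2}$. Iterating gives
\begin{gather*}
  \theta^i f = \sum_{j=0}^{i} \binom{i}{j}\, c_{k,i,j}\, E_2^{\,j}\, \partial^{\,i-j} f ,
\end{gather*}
where the coefficients $c_{k,i,j}$ are explicit products of the form $\prod_{l}(k+i-1-l)/12$. In particular the coefficient of $E_2^j$ contains the factor $\binom{i}{j}\prod_{l=0}^{j-1}(k+i-1-l)$. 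Modulo~$p^2$ we have the congruences
\begin{gather*}
  E_2 \equiv E_{p+1}\ (\mathrm{mod}\ p), \qquad E_{p-1}^{\,p}\equiv E_{(p-1)p} \equiv 1\ (\mathrm{mod}\ p^2),
\end{gather*}
and more precisely $E_2 \equiv E_{p+1}E_{p-1}^{\,p-1} + p\cdot(\text{modular form}) \pmod{p^2}$, of the shape used by Chen--Kiming. Each $E_2^j$ can therefore be replaced by a genuine modular form modulo~$p^2$. The required weight is $2 + 2p(p-1)$ if $E_2^j$ genuinely appears with nonzero coefficient modulo~$p$, and only $2 + p(p-1)$ if the $E_2$ contributions vanish modulo~$p$ and survive only at level~$p$.

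\textbf{Upper bounds.} These follow from this representation. For $0<i<p$, multiplying through gives a modular form of weight $k+2i+2p(p-1)$. For $i=p-k+1$ and $i=p$, the relevant products $\prod(k+i-1-l)$ and $\binom{p}{j}$ are divisible by~$p$ for all $j\ge1$ in the relevant range. The obstruction term is then divisible by~$p$, so only its reduction modulo~$p$ matters, and it can be realized in weight $k+2i+p(p-1)$ using $E_{p-1}^{p}\equiv 1$ modulo~$p^2$ together with $E_2\equiv E_{p+1}$ modulo~$p$. For $i=0$ the statement is trivial, since $\omega_p(f)=k$ forces $\omega_{p^2}(f)=k$.

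\textbf{Lower bounds.} These reduce to the mod~$p$ theory. The filtration modulo~$p^2$ is always congruent to $k+2i$ modulo $p-1$, and more strongly modulo $p(p-1)$, because weights of forms congruent modulo~$p^2$ agree modulo $p(p-1)$. So one only has to exclude the single smaller candidate weight, namely $k+2i+p(p-1)$ in the generic case and $k+2i$ in the exceptional cases. I would write $\theta^i f \equiv g \pmod{p^2}$ with $g$ of the smaller weight. Then $g \equiv \theta^i f \pmod p$, and Jochnowitz's mod~$p$ theta cycle gives $\omega_p(\theta^i f)=k+2i$ for $i\le p-k$ and $k+2i+(p-1)$ afterwards. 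Dividing the difference by~$p$ yields a form congruent modulo~$p$ to $\frac{1}{p}\bigl(\theta^i f - g\bigr)$. Its filtration modulo~$p$ can then be compared with the explicit $E_2$ term: the leading coefficient $\binom{i}{j}\prod(k+i-1-l)$ for the top power of $E_2$ is a $p$-adic unit in the generic range. Using that $E_2$ has filtration $p+1$ modulo~$p$ and is not congruent to a modular form of lower weight, this forces a contradiction.

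\textbf{Main obstacle.} The hard step is the exceptional positions $i=p-k+1$ and $i=p$. There the $E_2$ term is divisible by exactly~$p$, and one must show it does not vanish modulo~$p^2$, so that the filtration is not even smaller. I would first isolate the $E_2$-linear term, whose coefficient is $i(k+i-1)/12$, which has $p$-adic valuation exactly~$1$. Its mod-$p$ reduction then becomes $\theta^{i-1}$-type data of~$f$ times $E_{p+1}$. The key point to verify is that this is not congruent to something killable by lowering the weight, which I would attempt using $\omega_p(\theta^{p-k}f)=p+k-2+\dots$ and the nonvanishing $\theta^{p-k+1}f\not\equiv 0$ from the hypothesis $\omega_p(f)=k$.
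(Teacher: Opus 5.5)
\textbf{There is a genuine gap: the lower bounds, which are the heart of the theorem, are not established, and neither is the generic upper bound.}

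\textbf{What is right.} Your framework is essentially the paper's. You expand $\theta^i f$ in powers of $E_2$, realize $E_2$ modulo $p^2$ by a modular form, and use $\omega_{p^2}(\theta^i f)\equiv k+2i\pmod{p(p-1)}$ so that only one smaller weight must be excluded. You then want a contradiction from the filtration modulo $p$ of the $p$-divisible part; the paper packages this as the factor filtration $\wtd\omega_{p^2}$ together with Lemma~\ref{la:weight_from_factor_filtration}. Your upper bounds at $i=p-k+1$ and $i=p$ are fine, since there every $E_2$-term carries a factor $p$ and $E_2\equiv E_{p+1}\pmod p$ suffices.

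\textbf{The missing input.} Writing $E_2\equiv E_{p+1}E_{p-1}^{p-1}+pB\pmod{p^2}$ with $B$ an unspecified form of weight $2+2p(p-1)$ gives no control over the $p$-part $p\,n\,E_{p+1}^{n-1}B$ of $E_2^n$. Without $B\bmod p$ you get no lower bound at all. You also do not get the upper bound $k+2i+2p(p-1)$ for $0<i<p$, $i\neq p-k+1$, because the naive weight of $f_jE_{p+1}^{i-j-1}B$ exceeds $k+2i+2p(p-1)$ once $i-j\ge 2$. The paper uses the explicit congruence \eqref{eq:e2_congruence}, $E_2\equiv 12\partial E_{p-1}\,E_{p-1}^{2p-1}+pE_{p+1}^pE_{p-1}^{p-2}\pmod{p^2}$. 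So the correction is $E_{p+1}^p$, of filtration $p(p+1)$ modulo $p$, and this yields Lemma~\ref{lem:E2_powers}: $\wtd\omega_{p^2}(fE_2^n)=k+2n+(n+p+1)(p-1)$ for $p\nmid n$.

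For $0<i<p-k+1$ the lower bound comes precisely from the $p$-part $p\,i\,fE_{p+1}^{i+p-1}$ of the top term $fE_2^i$. Its filtration modulo $p$, namely $k+2i+(i+p+1)(p-1)$, exceeds $k+2i+p(p-1)$ and strictly exceeds the contribution of every other term. The facts you invoke (that $E_2$ has filtration $p+1$ modulo $p$, or that $E_2$ needs large weight modulo $p^2$) do not measure the $p$-part of $fE_2^i$ against the other terms, which is what the contradiction needs. You also misquote Jochnowitz: $\omega_p(\theta^if)=k+i(p+1)$ for $0\le i\le p-k$, not $k+2i$.

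\textbf{The range $p-k+1<i<p$.} Here the coefficient of $fE_2^i$ is $12^{-i}(i+k-1)!/(k-1)!$, which is divisible by $p$, contrary to your claim of a unit leading coefficient in the generic range. No single term controls the filtration. The paper splits $\theta^if\equiv p\td f_1+p\td f_2+\td f_3\pmod{p^2}$. It shows that the $p$-parts of the terms $f_jE_2^{i-j}$ with $j\ge p-k+1$ recombine into $\tfrac{i(i+k-1)}{12}E_{p+1}^p\,\theta^{i-1}f$, using $(i-j)\binom{i}{j}=i\binom{i-1}{j}$ and \eqref{eq:thetai_e2_power_expansion} read modulo $p$. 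Its filtration $p(p+1)+\omega_p(\theta^{i-1}f)$ is computed from Jochnowitz past the first low point, separately for ordinary and non-ordinary $f$, and it beats the bounds for $p\td f_1$ and $\td f_3$.

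\textbf{The points $i=p-k+1$ and $i=p$.} Your plan to isolate the $E_2$-linear term is misdirected. That term has filtration at most $k+2i+(p-1)$ modulo $p$ and is swamped by the other $p$-divisible terms, so nothing follows from it alone. The term that matters is again $fE_2^i$. Its coefficient is exactly divisible by $p$, and its $p$-part has filtration $\omega_p(fE_{p+1}^i)=k+i(p+1)>k+2i$, dominating all other contributions. At $i=p$ the terms with $1\le j\le p-k$ even vanish modulo $p^2$, and $f_p$ has weight $k+2p$.

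\textbf{A minor point.} Iterating $\theta f=\partial f+\frac{k}{12}E_2f$ does not give an expansion in plain $\partial^{i-j}f$, since $\theta E_2=\frac{1}{12}(E_2^2-E_4)$ produces $E_4$-terms. You need the modified Serre derivatives $\wht\partial^j_k f$ of \eqref{eq:thetai_e2_power_expansion}.
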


The next corollary will follow from Theorem~\ref{mainthm:first_p_interval_weight_filtration} and Theorem~\ref{mainthm:first_half_of_p_intervals_weight_filtration}.

\Needspace*{5\baselineskip}
\begin{maincorollary}
\label{maincor:first_p_interval_weight_filtration_low_points}
Suppose that $f\in \rmM_k$ with~$0<k<p$ has $\omega_p(f)=k$. The first low point of the theta cycle of\/~$f$ modulo~$p^2$ occurs at\/~$i = p - k + 1$ and the second at\/~$i = p$. Moreover, if\/~$0 < i < p-1$  then
\begin{gather*}
  \omega_{p^2}\big( \theta^{i+1}\, f \big)
=\omega_{p^2}\big( \theta^{i}\, f \big) + 2
\quad
  \text{unless $i=p-k$}
\tx{.}
\end{gather*}
\end{maincorollary}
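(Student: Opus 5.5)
The corollary follows from the exact values in Theorem~\ref{mainthm:first_p_interval_weight_filtration} on $0\le i\le p$, together with the value of $\omega_{p^2}(\theta^{p+1} f)$. That value should be provided by Theorem~\ref{mainthm:first_half_of_p_intervals_weight_filtration}, which governs the next segment. The plan is to tabulate the filtrations and compare neighbours. Write $w_i \defeq \omega_{p^2}(\theta^i f)$.

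First, the increments. For $0<i<p-k+1$ and $p-k+1<i<p$ we have $w_i = k+2i+2p(p-1)$, while $w_{p-k+1}$ and $w_p$ are lower by exactly $p(p-1)$. Hence, for $0<i<p-1$, $w_{i+1}-w_i = 2$ whenever $i$ and $i+1$ both lie in $\{1,\dots,p-1\}\setminus\{p-k+1\}$. The only indices in $0<i<p-1$ where one of $i$, $i+1$ equals $p-k+1$ or $p$ are $i=p-k$ and $i=p-k+1$.

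These two need a direct check. At $i=p-k$ we get $w_{p-k+1}-w_{p-k} = 2-p(p-1)<0$, a fall, which is the stated exception. At $i=p-k+1$ we get $w_{p-k+2}-w_{p-k+1} = 2+p(p-1)$. This is not $2$, so it seems to contradict the statement. However, when $k\ge 4$ we have $p-k+2 \le p-2$, so $p-k+2$ lies strictly inside the range. I would recheck the intended reading here: either the statement also implicitly excepts $i=p-k+1$, or its range means steps not touching a low point. I expect the intended content is that all steps are $+2$ except around $p-k+1$. In the write-up I would record the jumps $2-p(p-1)$ at $i=p-k$ and $2+p(p-1)$ at $i=p-k+1$ explicitly.

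Next, the low points. From the table, $w_0=k<w_1=k+2+2p(p-1)$, so $i=0$ is not a low point, and there is no fall before $p-k$. We have $w_{p-k}>w_{p-k+1}<w_{p-k+2}$; when $k=4$, $p-k+2=p-2$ is still inside, and $k<p$ keeps every index in range. So $i=p-k+1$ is the first low point. Then $w_{p-1}=k+2(p-1)+2p(p-1) > w_p = k+2p+p(p-1)$, since the difference is $p(p-1)-2>0$. To conclude that $p$ is a low point we need $w_{p+1}>w_p$. This is the one input beyond Theorem~\ref{mainthm:first_p_interval_weight_filtration}, and I expect it to be the main obstacle. I would take from Theorem~\ref{mainthm:first_half_of_p_intervals_weight_filtration} (the segment starting at $p+1$) the value $w_{p+1}=k+2(p+1)+2p(p-1)$, or at least a lower bound exceeding $k+2p+p(p-1)$.

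Failing that, a lower bound can be argued directly from the mod $p$ theory. Modulo $p$, $\theta^{p+1}f\equiv\theta^2 f$ has filtration $k+4+2(p-1)$. Any mod $p^2$ filtration must be congruent to this weight modulo $p(p-1)$ and at least as large. Combining these restrictions should rule out $w_{p+1}\le w_p$.
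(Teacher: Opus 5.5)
Your reading of the ``Moreover'' clause is correct. By Theorem~\ref{mainthm:first_p_interval_weight_filtration}, $w_{p-k+2}-w_{p-k+1}=2+p(p-1)$. Since $k\ge 4$ we have $p-k+1<p-1$, so the clause as printed fails at $i=p-k+1$, and the exceptional set should be $\{p-k,\,p-k+1\}$. The paper's proof says all other assertions ``follow immediately from Theorem~\ref{mainthm:first_p_interval_weight_filtration}'' and passes over this. With that correction, your tabulation and your treatment of the first low point are the paper's argument.

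For the second low point you also follow the paper by taking $w_{p+1}$ from Theorem~\ref{mainthm:first_half_of_p_intervals_weight_filtration}. However, you never check that the theorem gives the exact value at $p+1$, and that check is the one point the paper's proof records.
\begin{itemize}
\item With $n=i'=1$ you are in the range $n\le i'\le p-k+1-n$, because $k<p$.
\item The index $i=p+1$ is not exceptional, because $(p+1)^2+(k-1)(p+1)-1\equiv k-1\not\equiv 0\pmod p$ when $4\le k<p$.
\item If $p+1$ were exceptional, the theorem would only give the upper bound $k+2(p+1)+p(p-1)$. That bound says nothing about whether $p$ is a low point.
\end{itemize}

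Your fallback does not work and should be dropped. The value $w_{p+1}=k+2p+2$ satisfies $w_{p+1}\ge\omega_p(\theta^2 f)$ and $w_{p+1}\equiv k+2(p+1)\pmod{p(p-1)}$. Yet it is smaller than $w_p=k+2p+p(p-1)$, so these two constraints cannot exclude a fall at $p$. Excluding it requires the factor-filtration computation in Theorem~\ref{thm:first_half_p_interval_low_point_filtrations}. There the dominant term $\tfrac{1}{12}\bigl(i^2+(k-1)i-n^2\bigr)E_{p+1}^p\,\theta^{i'+n-1}f$ survives modulo~$p$ precisely because $i$ is non-exceptional.
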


\begin{mainremarkenumerate}
\item
The weight filtrations at the first and second low points $i=p-k+1$, $i=p$ were determined in~\cite{kim-lee-2023}, but it was not proved that these are low points.

\item
The positions of the first low point modulo~$p$ and~$p^2$ agree, but unlike the modulo~$p$ case, the modulo~$p^2$ weight filtration of the first low point is independent of whether or not~$f$ has a~$\rmU_p$\nbd{}congruence modulo~$p$.

\item
The position of the second low point modulo~$p^2$ matches the position of the second low point for modular forms which do not have a~$\rmU_p$\nbd{}congruence modulo~$p$.

\item
The analysis in~\cite{jochnowitz-1982a} shows that $\omega_p(\theta^{i+1} f) \ne \omega_p(\theta^{i} f) + 2$ for all~$i \ge 1$, which is in contrast to the situation modulo~$p^2$, where a rise of $2$ is the common case.
\end{mainremarkenumerate}

Our next result determines a substantial part of the weight filtration 
modulo $p^2$ of~$\theta^if$ for general~$i$, 
and identifies families of low points.
Some of them appear at regular intervals, and some appear at what we call exceptional positions.
We will consider indices $i$ with 
\begin{gather}\label{eq:idef_Thm_C}
 np \le i \le np+p - k + 1\tx{,} \qquad 1 \le n < p
\tx{.}
\end{gather}
We call such an index \emph{exceptional} if it is a solution of the congruence
\begin{gather}\label{eq:def_exceptional}
   i^2 + (k-1) i - n^2 \equiv 0 \quad\pmod{p} 
   \tx{.}
\end{gather}
In Corollary~\ref{maincor:first_half_of_p_intervals_weight_filtration_low_points} we will show that 
these exceptional indices often correspond to low points which disrupt regularity in the theta cycle.

\Needspace*{5\baselineskip}
\begin{maintheorem}
\label{mainthm:first_half_of_p_intervals_weight_filtration}
Suppose that\/~$f\in \rmM_k$ with $0<k<p$ has~$\omega_p(f)=k$. Let\/~$i$ be as in \eqref{eq:idef_Thm_C}. 
\begin{enumeratearabic}
\item
 If\/~$i$ is exceptional 
 then
\begin{gather*}
  \omega_{p^2}\big( \theta^i\, f \big)
\le
  k + 2i + p (p-1)
\tx{.}
\end{gather*}

\item If\/~$i$ is not exceptional
then we have the following exact values and nontrivial bounds, where~$i' = i - np$ is the least non-negative residue of\/ $i\pmod p$.
\begin{alignat*}{2}
  \omega_{p^2}\big( \theta^i\, f \big)
&\le
  k + 2i + p (p-1)
\tx{,}\quad
&&
  \tx{if\/ } i' = 0
  \tx{ (proved in~\cite{kim-lee-2023})}
\tx{;}
\\
  \omega_{p^2}\big( \theta^i\, f \big)
&=
  k + 2i + p (p-1)
\tx{,}\quad
&&
  \tx{if\/ } 
  0 < i' < n
  \tx{ and\/ }
  i' \le p - k + 1 - n
\tx{;}
\\
  \omega_{p^2}\big( \theta^i\, f \big)
&=
  k + 2i + 2p (p-1)
\tx{,}\quad
&&
  \tx{if\/ } 
  n \le i' \le p - k + 1 - n
\tx{;}
\\
  \omega_{p^2}\big( \theta^i\, f \big)
&\le
  k + 2i + p (p-1)
\tx{,}\quad
&&
  \tx{if\/ } 
  p - k + 1 - n < i' < p - k + 1
\tx{;}
\\
  \omega_{p^2}\big( \theta^i\, f \big)
&\le
  k + 2i + p (p-1)
\tx{,}\quad
&&
  \tx{if\/ } i' = p - k + 1
  \tx{ (proved in~\cite{kim-lee-2023})}
\tx{.}
\end{alignat*}
\end{enumeratearabic}
\end{maintheorem}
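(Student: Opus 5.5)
The plan is to write $\theta^i f$ explicitly as a quasi-modular form and then reduce it to a genuine modular form modulo~$p^2$ by multiplying with suitable powers of $E_{p-1}$ and using $E_2$-corrections. Throughout we write $j=i-np=i'$.

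\textbf{Step 1: the structure formula.} Iterating $\theta = \frac{1}{12}(\partial_k + k E_2)$ on $f$ gives
\begin{gather*}
  \theta^i f = \sum_{r=0}^{i} \binom{i}{r}\frac{(k+i-1)!}{(k+r-1)!}\Big(\frac{E_2}{12}\Big)^{i-r}\partial^r f ,
\end{gather*}
with Serre derivatives $\partial^r f$ modular of weight $k+2r$. Modulo~$p^2$ we have $E_2 \equiv E_{p+1} + (\text{$p$ times a quasi-modular form of higher depth})$. More precisely, there is the classical identity $E_2 \equiv E_{p+1}E_{p-1}^{\,p-1}\cdots$, or rather $E_2 - E_{p+1}E_{p-1}^{p}\cdot(\ldots)$ expressed via $\theta$ of $E_{p-1}$, which can be made explicit modulo~$p^2$. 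Replacing each power $E_2^{s}$ accordingly, the obstruction to modularity of weight $k+2i+p(p-1)$, as opposed to $k+2i+2p(p-1)$, is a single term $p\cdot G_i$. Here $G_i$ is a modular form modulo $p$ whose leading coefficient is a polynomial in $i$ and $n$.

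\textbf{Step 2: compute that coefficient.} For $i=np+j$, reduce the binomials and factorials modulo~$p$ using Lucas' theorem. The factor $\frac{(k+i-1)!}{(k+r-1)!}$ vanishes modulo $p$ once $k+r-1< p\le k+i-1$ in the relevant window. This is exactly why the behaviour changes at $j=p-k+1-n$ and at $j=n$. The coefficient in front of the $E_2$-correction should then come out as a multiple of $j^2+(k-1)j-n^2$, consistent with definition \eqref{eq:def_exceptional}.

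\textbf{Step 3: deduce the bounds.}
\begin{itemize}
  \item \emph{Upper bounds.} Whenever the obstruction vanishes modulo $p$, the form $\theta^i f$ is congruent modulo $p^2$ to a modular form of weight $k+2i+p(p-1)$. This covers part (1), where $i$ is exceptional, and the bound cases in part (2). The endpoints $j=0$ and $j=p-k+1$ are cited from Kim--Lee.
  \item \emph{Equality $k+2i+2p(p-1)$ for $n\le j\le p-k+1-n$.} The obstruction is nonzero, so we must show that the weight cannot drop further. For this we use the standard fact that $\omega_{p^2}(g)$ is determined modulo $p(p-1)$ and admits the lower bound from $\omega_p$. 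Concretely, if $\theta^i f$ were congruent to a form of weight $k+2i+p(p-1)$ modulo $p^2$, then dividing the difference by $p$ would give a mod-$p$ relation $G_i\equiv$ (modular of lower filtration). We then contradict this by computing $\omega_p(G_i)$ via the Jochnowitz theory of mod-$p$ theta cycles, using $\omega_p(\theta^{j}f)$ for $f$ with $\omega_p(f)=k$.
  \item \emph{Equality $k+2i+p(p-1)$ for $0<j<n$.} Here we additionally need to exclude weight $k+2i$. Reducing modulo $p$ gives $\omega_p(\theta^i f)=\omega_p(\theta^j f)=k+2j+p+1>k+2i \pmod{p-1}$, and the appropriate congruence class rules this out.
\end{itemize}

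\textbf{Main obstacle.} The hardest step is Step 2 together with the lower-bound half of Step 3. Extracting the exact mod-$p$ coefficient of the $p$-adic $E_2$-obstruction, and showing that the resulting mod-$p$ form $G_i$ has large filtration, requires careful bookkeeping of the Lucas reductions across the two windows in $j$. We would first test the formula on $n=1$ against Theorem~\ref{mainthm:first_p_interval_weight_filtration} before attempting general $n$.
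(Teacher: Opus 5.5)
There is a genuine gap. Your architecture matches the paper's: expand $\theta^i f$ via \eqref{eq:thetai_e2_power_expansion}, isolate a multiple $pG_i$ whose mod-$p$ filtration decides everything, and evaluate it with Jochnowitz. However, the central computation is missing, and two deductions in Step~3 are wrong.

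\textbf{First gap: $G_i$ is never computed.} You read the factor $i^2+(k-1)i-n^2$ off the statement rather than deriving it. Also, ``$E_2\equiv E_{p+1}+p\cdot(\text{quasi-modular})$'' is not enough. You need the explicit congruence \eqref{eq:e2_congruence}; its consequence \eqref{eq:e2_power_congruence} gives $E_2^{s}$ the order-$p$ part $ps\,E_{p+1}^{s+p-1}E_{p-1}^{(s-1)(2p-1)+p-2}$.

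The order-$p$ contribution that can exceed weight $k+2i+p(p-1)$ then comes from two groups of terms:
\begin{itemize}
\item $(n-1)p\le j\le (n-1)p+i'$, where $\binom{i}{j}\frac{(i+k-1)!}{(j+k-1)!}$ is exactly divisible by $p$;
\item $np\le j\le i$, through the factor $s=i-j$ and the identity $(i-j)\binom{i}{j}=i\binom{i-1}{j}$.
\end{itemize}
Lucas, Wilson, and reading \eqref{eq:thetai_e2_power_expansion} backwards modulo $p$ turn these into $-\frac{n^2}{12}E_{p+1}^p\theta^{i-p}f$ and $\frac{i(i+k-1)}{12}E_{p+1}^p\theta^{i-1}f$. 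Both $\theta^{i-p}f$ and $\theta^{i-1}f$ are congruent to $\theta^{i'+n-1}f$ modulo $p$. Hence
$G_i\equiv\frac{1}{12}(i^2+(k-1)i-n^2)E_{p+1}^p\theta^{i'+n-1}f$.

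Proposition~\ref{prop:theta_cycle_modp} must be applied to $\theta^{i'+n-1}f$, not to $\theta^{i'}f$ as you suggest. For non-exceptional $i$ it gives $\omega_p(G_i)=k+2i+(i'+p-n+1)(p-1)$ when $i'+n-1<p-k+1$, and a smaller value once $i'+n-1\ge p-k+1$. This is the source of the thresholds, not the vanishing of factorials:
\begin{itemize}
\item $i'=n$ is where $i'+p-n+1$ crosses $p$;
\item $i'=p-k+1-n$ is where $\theta^{i'+n-1}f$ reaches its first mod-$p$ low point.
\end{itemize}
In particular, your justification of the bound for non-exceptional $i$ with $p-k+1-n<i'<p-k+1$ is wrong. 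There $G_i\not\equiv 0\pmod p$. The bound holds because $\omega_p(\theta^{i'+n-1}f)$ has dropped past the low point. Consequently $pG_i$ and the remaining terms (whose factor filtrations are at most $k+2i+(i'+k-1)(p-1)$) all have factor filtration at most $k+2i+p(p-1)$.

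\textbf{Second gap: excluding weight $k+2i$ when $0<i'<n$.} Your argument here fails. Modulo $p$ one has $\theta^i f\equiv\theta^{i'+n}f$, not $\theta^{i'}f$, and
$\omega_p(\theta^{i'+n}f)\le k+(i'+n)(p+1)=k+2i-(n-i')(p-1)<k+2i$.
So the mod-$p$ filtration yields no contradiction. Nor can any congruence condition separate $k+2i$ from $k+2i+p(p-1)$, since both satisfy \eqref{eq:filtcong}.

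What is needed is the factor filtration modulo $p^2$, which can be much larger than $\omega_p$. Since $n\le p-k$ here, $\omega_p(G_i)=k+2i+(i'+p-n+1)(p-1)$ strictly exceeds the bounds $k+2i+i'(p-1)$ and $k+2i+(i'+k-1)(p-1)$ for the other terms. Then \eqref{eq:factfilt_nonarch} gives $\widetilde\omega_{p^2}(\theta^i f)=k+2i+(i'+p-n+1)(p-1)>k+2i$. Lemma~\ref{la:weight_from_factor_filtration} then yields $k+2i+p(p-1)$ for $0<i'<n$ and $k+2i+2p(p-1)$ for $n\le i'\le p-k+1-n$ simultaneously.

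Your ``divide the difference by $p$'' argument for the latter case is this same dominance argument in another form. It too only goes through once $G_i$ has been computed as above.
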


For fixed weight and asymptotically as~$p \ra \infty$, our results provide exact filtration values for~50\% of the theta cycle modulo~$p^2$ and nontrivial bounds for~100\% of it (see Remark~\ref{rm:result_range}). We illustrate the range of our results for $f = \Delta$ and~$p = 59$ in Figure~\ref{fig:k12_mod59sq_results}.
\begin{figure}[h]
\includegraphics[draft=false,width=\linewidth]{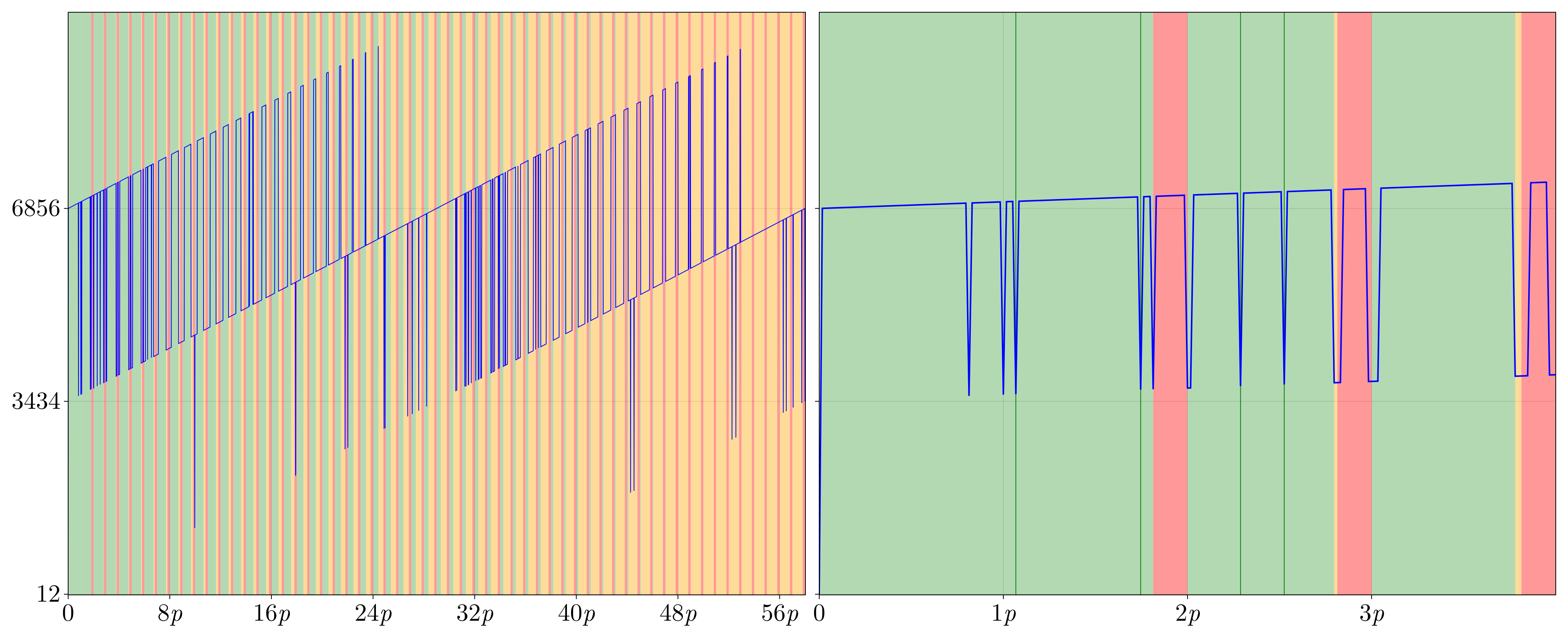}
\caption{The weight filtrations modulo~$p^2 = 59^2$ of~$\theta^i \Delta$, where~$\Delta$ is the normalized cusp form of weight~$12$. Filtration values are given for~$0 \le i \le p (p-1)$ (on the left) and~$0 \le i \le 4p$ (on the right) on the $x$-axis, and represented by a blue line connecting them. Vertical green lines on the right indicate exceptional low points (see Corollary~\ref{maincor:first_half_of_p_intervals_weight_filtration_low_points}). Green shaded areas represent ranges of~$i$ for which we prove exact filtration values. Orange shaded areas correspond to ranges of~$i$ for which we establish nontrivial upper bounds for the weight filtrations. We do not provide information on filtrations for~$i$ in the red shaded areas.}
\label{fig:k12_mod59sq_results}
\end{figure}
In this case, our results give exact filtration values for more than~33\% of the theta cycle modulo~$p^2$ and nontrivial bounds for more than~81.6\%. If $ f = \Delta$ and~$p > 1100$, then the range increases to more than~49\% of exact weight filtrations and more than~99\% of nontrivial filtration bounds.

In the next corollary we describe some of the structure of the theta cycle which is forced by 
Theorem~\ref{mainthm:first_half_of_p_intervals_weight_filtration}.  

\Needspace*{5\baselineskip}
\begin{maincorollary}
\label{maincor:first_half_of_p_intervals_weight_filtration_low_points}
Suppose that~$f\in \rmM_k$ with $0<k<p$ has~$\omega_p(f)=k$.

\begin{enumeratearabic}
\item
\label{it:maincor:first_half_of_p_intervals_weight_filtration_low_points:regular_low_points}
For~$1 \le n \le \frac{p - k + 1}{2}$ there is a low point at~$i = np + p - k + 2 - n$ unless we have $\omega_{p^2}( \theta^i\, f) = k+2i + p(p-1)$ and $\omega_{p^2}( \theta^{i+1}\, f)=k+2(i+1)$, in which case there is a low point at~$i+1$.

\item
\label{it:maincor:first_half_of_p_intervals_weight_filtration_low_points:exceptional_low_points}
Assume that~$np + n \le i < np + p - k + 1 - n$ and 
that $i$ is an exceptional index.
Then there is a low point at~$i$ (which we call an \emph{exceptional low point}).

\item
\label{it:maincor:first_half_of_p_intervals_weight_filtration_low_points:non_low_points}
Assume that~$np < i \le np + p - k + 1 - n$ and that~$i$ is not an exceptional index.
Then~$i$ is \emph{not} a low point.

\item
\label{it:maincor:first_half_of_p_intervals_weight_filtration_low_points:rises}
Assume that~$np < i < np + p - k + 1 - n$,  that~$i \ne np + n - 1$, and that
neither~$i$ nor~$i+1$ is an exceptional index.
Then there is a rise by~$2$ at~$i$.
\end{enumeratearabic}
\end{maincorollary}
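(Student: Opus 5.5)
The plan is to derive all four parts from Theorem~\ref{mainthm:first_half_of_p_intervals_weight_filtration} by comparing neighbouring filtrations, together with two standard facts about filtrations modulo~$p^2$. First, since $\theta$ raises the weight by~$2$ modulo the kernel of weights, $\omega_{p^2}(\theta^i f) \equiv k+2i \pmod{p(p-1)}$. Consequently any upper bound of the shape $k+2i+p(p-1)$ forces the value to be either exactly $k+2i+p(p-1)$ or at most~$k+2i$. Second, we have the trivial lower bound $\omega_{p^2}(g)\ge \omega_p(g)$. Throughout, write $i'=i-np$ and carry out a case analysis on the position of~$i'$ in the five ranges of Theorem~\ref{mainthm:first_half_of_p_intervals_weight_filtration}.

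\emph{Parts (4) and (3).} These are the easiest, so I would do them first. For~(4), the hypotheses place $i'$ and $i'+1$ either both in $[1,n-1]$ or both in $[n,p-k+1-n]$; the excluded case $i'=n-1$ is exactly the transition between these ranges. The condition $i'+1\le p-k+1-n$ also guarantees the side condition $i'\le p-k+1-n$ of the first exact case. Since neither index is exceptional, the exact formulas of Theorem~\ref{mainthm:first_half_of_p_intervals_weight_filtration}(2) apply to both, and their difference is~$2$. For~(3), it suffices to exhibit a neighbour with smaller filtration, and I would always use $i-1\ge np$.
\begin{itemize}
\item If $0<i'<n$, then $\omega_{p^2}(\theta^i f)=k+2i+p(p-1)$. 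Every alternative for $i-1$ has filtration at most $k+2(i-1)+p(p-1)$: the case $i'-1=0$ (Kim--Lee bound), the case where $i-1$ is exceptional (part~(1) of the theorem), and the exact formula otherwise.
\item If $i'\ge n$, then $\omega_{p^2}(\theta^i f)=k+2i+2p(p-1)$, while $i-1$ has filtration at most $k+2(i-1)+2p(p-1)$ in every case.
\end{itemize}
In both cases there is a strict fall into~$i$, so $i$ is not a low point.

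\emph{Part (1).} Put $i=np+p-k+2-n$, so $i'=p-k+2-n$. Since $n\le (p-k+1)/2$, the index $i-1$ has $i'-1=p-k+1-n\ge n$. Hence $\omega_{p^2}(\theta^{i-1}f)=k+2i-2+2p(p-1)$ if $i-1$ is not exceptional, and in any case $\omega_{p^2}(\theta^{i-1}f)\ge k+2i-2+p(p-1)$ after a separate check. For $i$ itself the theorem gives the bound $k+2i+p(p-1)$, from the fourth case when $n\ge 2$ and from the Kim--Lee case when $n=1$. This gives a fall into~$i$. It remains to show a rise at~$i$, or, in the stated exceptional configuration, a fall into $i+1$ followed by a rise. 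Here I would combine the congruence above with the mod~$p$ lower bound coming from Jochnowitz's description of $\omega_p(\theta^{i+1}f)$; this should rule out the values of $\omega_{p^2}(\theta^{i+1}f)$ other than $k+2i+2+p(p-1)$ and $k+2i+2$. The value $k+2i+2$ is exactly the alternative singled out in the statement. When $n=1$ the index $i+1$ lies just outside the range~\eqref{eq:idef_Thm_C}, so there I would use the general upper bound on $\omega_{p^2}(\theta g)$ in terms of $\omega_{p^2}(g)$ established earlier.

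\emph{Part (2), the main obstacle.} For an exceptional $i$ with $n\le i'<p-k+1-n$, the bound $k+2i+p(p-1)$ is not by itself smaller than the neighbour $i-1$ when $i'=n$, since that neighbour has value $k+2i-2+p(p-1)$. So I would go back into the proof of Theorem~\ref{mainthm:first_half_of_p_intervals_weight_filtration}. In the range $i'\ge n$, exceptionality should mean that the leading obstruction coefficient vanishes modulo~$p$, while the secondary one, which produces the drop by $p(p-1)$, also vanishes. If that is right, then by the congruence the filtration is at most~$k+2i$, which is strictly below every non-exceptional neighbour, whose value is at least $k+2i-2+p(p-1)$. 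The remaining subcase is that $i-1$ or $i+1$ is also a root of~\eqref{eq:def_exceptional}. This forces $2i+1\equiv -(k-1)$ or $2i-1\equiv-(k-1)\pmod p$, so $i'$ is determined as roughly $(p-k)/2$, and it needs a separate comparison of the two exceptional values. I expect this subcase, together with extracting the sharper exceptional bound, to be the hardest step.
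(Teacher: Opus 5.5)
Your arguments for parts (3) and (4) are correct and coincide with the paper's. The genuine gap is in part (2). It comes from an elementary check you did not make: an index with $i'=n$ or $i'=p-k+1-n$ is never exceptional. These give $i\equiv n$ and $i\equiv -(k-1)-n \pmod p$ respectively. In both cases $i^2+(k-1)i-n^2\equiv n(k-1)\not\equiv 0\pmod p$, because $1\le n<p$ and $4\le k<p$. So the case $i'=n$ that you identify as the main obstacle is empty, and every exceptional $i$ in part (2) has $n<i'<p-k+1-n$. Your ``consecutive roots'' subcase is empty too. It would require $p\mid 2i'+k$ or $p\mid 2(i'-1)+k$, but both numbers are even and lie strictly between $0$ and $2p$ (note that $(p-k)/2$ is not even an integer). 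Hence $i\pm1$ are non-exceptional with $n\le i'\pm1\le p-k+1-n$. Theorem~\ref{mainthm:first_half_of_p_intervals_weight_filtration} then gives them the exact values $k+2(i\pm1)+2p(p-1)$, both larger than the bound $k+2i+p(p-1)$ at $i$. That is the whole proof; nothing from inside the proof of Theorem~\ref{thm:first_half_p_interval_low_point_filtrations} is needed.

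The route you propose instead cannot be completed, because its key claim is false: an exceptional $i$ in this range never has $\omega_{p^2}(\theta^i f)\le k+2i$. By Proposition~\ref{prop:theta_cycle_modp}, $\theta^i f\equiv\theta^{i'+n}f\pmod p$ with $i'+n<p-k+1$. So $\omega_{p^2}(\theta^i f)\ge\omega_p(\theta^{i'+n}f)=k+(i'+n)(p+1)=k+2i+(i'-n)(p-1)>k+2i$, since $i'>n$. Combined with the upper bound in part (1) of Theorem~\ref{mainthm:first_half_of_p_intervals_weight_filtration} and the congruence modulo $p(p-1)$, the filtration is exactly $k+2i+p(p-1)$.

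The same missing observation is needed in part (1). A strict fall into $i$ requires $\omega_{p^2}(\theta^{i-1}f)=k+2(i-1)+2p(p-1)$, i.e.\ that $i-1$ (which has $(i-1)'=p-k+1-n$) is not exceptional. Your fallback bound $\omega_{p^2}(\theta^{i-1}f)\ge k+2i-2+p(p-1)$ gives no fall against $k+2i+p(p-1)$.

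For the rises in part (1), the ingredient you leave implicit is the inequality $k+2i-p(p-1)\le (4-k)(p+1)-2n<0$, which follows from $2n\le p-k+1$ and $k\ge 4$. With the congruence and the impossibility of filtrations $0$ and $2$, it yields $\omega_{p^2}(\theta^{i+1}f)\ge k+2(i+1)$. In the exceptional configuration it also yields $\omega_{p^2}(\theta^{i+2}f)\ge k+2(i+2)$. Only these lower bounds are needed. So there is no need to pin down $\omega_{p^2}(\theta^{i+1}f)$, or to invoke an extra bound on $\omega_{p^2}(\theta g)$ when $n=1$.
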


\begin{mainremarkenumerate}
\item
In the range $0 \leq i < 2p - k + 1$
the two theorems give exact weight filtrations for~$\theta^if$ apart from possible exceptional indices.
 By part \ref{it:maincor:first_half_of_p_intervals_weight_filtration_low_points:exceptional_low_points} of Corollary~\ref{maincor:first_half_of_p_intervals_weight_filtration_low_points} and the fact that 
  the index~$i=2p-k$ is not exceptional it follows that there are exceptional low points at all exceptional indices in this range. 
\item
 There are two boundary cases $i'=0$ and $i'=p-k+1$ in Theorem~\ref{mainthm:first_half_of_p_intervals_weight_filtration};  these indices cannot be exceptional.
 These cases were studied in detail in~\cite{kim-lee-2023}, and in many cases exact values for the filtration are given in \cite[Theorem 1.8]{kim-lee-2023}. Here we give a bound which follows from this result and which applies to all cases.

\item
The exact filtration values in Theorem~\ref{mainthm:first_half_of_p_intervals_weight_filtration} for~$n \le i' \le p - k + 1 - n$ are independent of whether~$f$ has a~$\rmU_p$\nbd{}congruence or not. Thus, information on~$\rmU_p$\nbd{}congruences must be encoded in the part of the theta cycle for which we do not have exact filtration values.

\item
The filtration bounds in Theorem~\ref{mainthm:first_half_of_p_intervals_weight_filtration} are quadratic in~$p$, while the bounds in Lem\-ma~\ref{la:naive_upper_bounds_cycle_filtrations} and Remark~\ref{rm:la:naive_upper_bounds_cycle_filtrations} below are cubic in~$p$, and the bounds which follow from Inequality~\eqref{eq:trivial_upper_bounds_cycle_filtrations} are quartic in~$p$.

\item
Let~$f$ be a modular form as in Theorems~\ref{mainthm:first_p_interval_weight_filtration} and~\ref{mainthm:first_half_of_p_intervals_weight_filtration}. Then Corollary~\ref{maincor:first_p_interval_weight_filtration_low_points} identifies two low points occurring at regular positions, and the first part of Corollary~\ref{maincor:first_half_of_p_intervals_weight_filtration_low_points} identifies $\left\lfloor \frac{p - k + 1}{2} \right\rfloor$ such low points. All of these occur in the green-shaded areas of Figure~\ref{fig:k12_mod59sq_results}.

\item
It would be interesting to determine the number of exceptional low points, which would amount to counting solutions to the congruence~$i^2 + (k-1) i - n^2 \equiv 0 \,\pmod{p}$ with~$n \le i < p - k + 1 - n$. Heuristically, the number of such solutions should be proportional to~$p$ on average as~$p$ varies.
\end{mainremarkenumerate}

We briefly describe some of the new ideas and input which allow us to access a large part of the theta cycle.
In the next section we introduce the \emph{factor filtration},
a refinement of the weight filtration which plays a key role (all of the results above will follow from our study of the corresponding factor filtrations). Roughly speaking, the factor filtration 
involves dividing out as many powers of $E_{p-1}\equiv 1\pmod p$ as possible before considering the weight filtration.
This idea was inspired in part by recent work of the authors with Hanson~\cite{ahlgren-hanson-raum-richter-2025} in which we prove that every Eisenstein series on $\operatorname{SL}_2(\ZZ)$ has uniformly low factor filtration modulo $p^2$.  In the same paper we determine a precise expression \eqref{eq:e2_congruence} for $E_2\pmod{p^2}$; this is another important input here.
The starting point for our results is 
 the expansion \eqref{eq:thetai_e2_power_expansion} of~$\theta^if$ in terms of $E_2$.   Our ability to determine the precise factor filtration depends on isolating a group of terms in this expansion which combine to give a unique term of highest factor filtration (we often use the properties of the theta cycle modulo $p$  in order to isolate these terms).
When we are unable to isolate such a group of terms the method still provides a non-trivial bound for the factor filtration.  Finally, 
the weight filtrations are easily determined using Lemma~\ref{la:weight_from_factor_filtration}.
   
   After some preliminaries in 
Section~\ref{sec:preliminaries}, 
 we prove Theorem~\ref{mainthm:first_p_interval_weight_filtration} 
 in Section~\ref{sec:first_p_interval}.
The proofs of Theorem~\ref{mainthm:first_half_of_p_intervals_weight_filtration}, Corollary~\ref{maincor:first_p_interval_weight_filtration_low_points}, and Corollary~\ref{maincor:first_half_of_p_intervals_weight_filtration_low_points}
are more involved and occupy Section~\ref{sec:first_half_p_interval}.
In Section~\ref{ssec:preliminaries:upper_bounds_cycle_filtrations} we discuss the quality of the general bounds which are obtained for filtrations modulo $p^2$.

\subsection*{Acknowledgements}

We thank Amy Woodall and the referees for many helpful suggestions which improved our exposition.

\section{Preliminaries}%
\label{sec:preliminaries}

Let~$p \geq 5$ be prime, and let~$\rmM_k$ be the space of holomorphic weight~$k$ modular forms on $\operatorname{SL}_2(\ZZ)$ with~$p$\nbd{}integral coefficients.
If~$f \in \rmM_k$ it has a Fourier expansion~$\sum a(n) q^n$, and we identify it with this expansion.
For even~$k \ge 2$ let
\begin{gather*}
  E_k
\defeq
  1 - \mfrac{2k}{B_{2k}}\, \sum_{n=1}^\infty \sigma_{k-1}(n)\, q^n
\end{gather*}
be the weight~$k$ Eisenstein series, which for~$k = 2$ is quasi-modular and for~$k > 2$ modular.

Let $m$ be a positive integer, and let $\cM_{\bullet,m} \subseteq\left(\ZZ/p^m\ZZ\right)\llbracket q \rrbracket$ be the set of reductions modulo $p^m$ of all elements of all $\rmM_k$, i.e.\@ the reductions of their Fourier expansions. If $\ov f =\sum \ov{a(n)}q^n\in \cM_{\bullet,m}$  then we define the weight filtration
\begin{gather}
\label{eq:def:weight_filtration}
  \omega_{p^m}(\ov f)
\defeq
  \inf\big\{
  k
  \condsep
  \ov f \equiv g \,\pmod{p^m}
  \tx{ for some }
  g \in \rmM_k
  \big\}
\tx{.}
\end{gather}
Recall that
\begin{gather}
\label{eq:epm_congruences}
  E_{p-1}\equiv 1 \;\pmod p
\quad\tx{and therefore}\quad 
  E_{p-1}^{p^{m-1}} \equiv 1 \;\pmod{p^m}
\tx{.}
\end{gather}
As a refinement of the weight filtration, we introduce the factor filtration, which is defined 
by
\begin{gather}
\label{eq:def:factor_filtration}
  \wtd\omega_{p^m}(\ov f)
\defeq
  \inf\big\{
  k
  \condsep
  \ov f \equiv E_{p-1}^n\, g \,\pmod{p^m}
  \tx{ for some } n \ge 0
  \tx{ and some } g \in \rmM_k
  \big\}
\tx{.}
\end{gather}
By a standard abuse of notation we will write
\begin{gather*}
  \omega_{p^m}(f) = \omega_{p^m}(\ov f)
\tx{,}\qquad
  \wtd\omega_{p^m}(f) = \wtd\omega_{p^m}(\ov f)
\end{gather*}
when $f\in\ZZ_{(p)} \llbracket q \rrbracket$ has $\ov f=f\pmod {p^m}\in \cM_{\bullet,m}$.  
From~\cite[Lemma 5]{swinnerton-dyer-1973} we have
$\omega_p(f)=\wtd\omega_p(f)$.
If $f_1\in \rmM_{k_1}$, $f_2\in \rmM_{k_2}$ have $f_1, f_2 \not\equiv 0 \,\pmod p$, then by 
\cite[Theorem 1]{Serre-p-adic} we have 
\begin{gather}\label{eq:serremodpm}
  f_1 \equiv f_2 \;\pmod {p^m}
\implies
  k_1\equiv k_2 \;\pmod{p^{m-1}(p-1)}
\tx{.}
\end{gather}
It follows that if~$\ov f\in \cM_{\bullet,m}$ is the 
reduction modulo $p^m$ of a modular form of weight $k$, and if~$\ov f\not\equiv 0\pmod p$, then
\begin{gather}\label{eq:redmodpm}
  \omega_{p^m}(\ov f) \equiv k \;\pmod {p^{m-1}(p-1)}
\tx{.}
\end{gather}
We also have
\begin{gather}\label{eq:filt_eq_mod_p}
\omega_{p^m}(f) \equiv \wtd\omega_{p^m}(f) \;\pmod{p-1}
\tx{,}
\end{gather}
which follows from observing that~$\omega_{p^m}(\ov{f}) = \omega_{p^m}(E_{p-1}^n g)$ on the right hand side of~\eqref{eq:def:factor_filtration}.

The next lemma, which follows from the definitions and these facts, allows us to determine the weight filtration from the factor filtration. 
\begin{lemma}%
\label{la:weight_from_factor_filtration}
Suppose that $\ov f\in \cM_{\bullet,m}$ has $\ov f \not\equiv 0 \,\pmod{p}$ and that $\ov f$ is the reduction modulo $p^m$ of a modular form of weight $k$.  Then $\omega_{p^m}(\ov f)$ is the smallest integer such that 
\begin{gather}\label{eq:lemma1}
  \omega_{p^m}(\ov f) \geq \wtd\omega_{p^m}(\ov f)
\quad\text{and}\quad
  \omega_{p^m}(\ov f) \equiv k \;\pmod{p^{m-1}(p-1)}
\tx{.}
\end{gather}
\end{lemma}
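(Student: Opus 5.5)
Write $\omega = \omega_{p^m}(\ov f)$ and $\wtd\omega = \wtd\omega_{p^m}(\ov f)$, and let $N$ denote the smallest integer with $N \ge \wtd\omega$ and $N \equiv k \pmod{p^{m-1}(p-1)}$. The plan is to prove $\omega \ge N$ and $\omega \le N$ separately. Both directions follow directly from the definitions together with \eqref{eq:epm_congruences}, \eqref{eq:serremodpm}, \eqref{eq:redmodpm} and \eqref{eq:filt_eq_mod_p}.

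\emph{Lower bound $\omega \ge N$.} Any $g \in \rmM_{\omega}$ with $\ov f \equiv g \pmod{p^m}$ is admissible in \eqref{eq:def:factor_filtration}, taking $n=0$. Hence $\omega \ge \wtd\omega$. Since $\ov f \not\equiv 0 \pmod p$, \eqref{eq:redmodpm} gives $\omega \equiv k \pmod{p^{m-1}(p-1)}$. So $\omega$ satisfies both conditions in \eqref{eq:lemma1}, and by minimality of $N$ we get $\omega \ge N$.

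\emph{Upper bound $\omega \le N$.} Choose $n \ge 0$ and $g \in \rmM_{\wtd\omega}$ with $\ov f \equiv E_{p-1}^n g \pmod{p^m}$. Note that $g \not\equiv 0 \pmod p$, because $E_{p-1} \equiv 1 \pmod p$ and $\ov f \not\equiv 0 \pmod p$.
\begin{enumerate}[label=(\arabic*)]
\item Reduce the exponent $n$ modulo $p^{m-1}$. By \eqref{eq:epm_congruences}, $E_{p-1}^{p^{m-1}} \equiv 1 \pmod{p^m}$, so we may replace $n$ by its residue $n_0$ with $0 \le n_0 < p^{m-1}$. This leaves $\ov f \equiv E_{p-1}^{n_0} g$, which has weight $\wtd\omega + n_0(p-1)$.
\item Determine $n_0$ from the weight congruence. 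Since $\ov f$ is also the reduction of a weight-$k$ form and $\ov f \not\equiv 0 \pmod p$, \eqref{eq:serremodpm} gives
\begin{gather*}
\wtd\omega + n_0(p-1) \equiv k \pmod{p^{m-1}(p-1)}.
\end{gather*}
\item Compare with $N$. The weight $\wtd\omega + n_0(p-1)$ is at least $\wtd\omega$, satisfies the congruence in \eqref{eq:lemma1}, and is less than $\wtd\omega + p^{m-1}(p-1)$. The integers that are $\ge \wtd\omega$ and satisfy the congruence form an arithmetic progression with difference $p^{m-1}(p-1)$, and $N$ is its first term. So the only such integer in the window $[\wtd\omega, \wtd\omega + p^{m-1}(p-1))$ is $N$, which forces $\wtd\omega + n_0(p-1) = N$.
\end{enumerate}
Thus $\ov f$ is congruent modulo $p^m$ to a form of weight $N$, and $\omega \le N$.

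No step is a real obstacle. The one point needing care is checking $g \not\equiv 0 \pmod p$, so that Serre's congruence \eqref{eq:serremodpm} applies. Choosing the representative $n_0 < p^{m-1}$ is what lands the weight exactly on $N$.
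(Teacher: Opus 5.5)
Your proof is correct and takes essentially the paper's route. You reduce the exponent of $E_{p-1}$ modulo $p^{m-1}$ so that the weight $\wtd\omega + n_0(p-1)$ is the integer $N$ characterized by \eqref{eq:lemma1}, and you use $\omega \ge \wtd\omega$ together with \eqref{eq:redmodpm} to rule out anything smaller. The differences are cosmetic: you split the argument into two inequalities where the paper argues by contradiction, and you spell out via \eqref{eq:serremodpm} why this weight equals $N$, a step the paper leaves implicit.
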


\begin{proof}
Let $k_0=\wtd\omega_{p^m}(\ov f)$.  By definition there exists $g\in M_{k_0}$ such that $\ov f\equiv E_{p-1}^ng\,\pmod{p^m}$ for some $n\geq 0$.  Letting $n'$ be the least non-negative residue of $n$ modulo $p^{m-1}$, we see by~\eqref{eq:epm_congruences} that~$\ov f\equiv E_{p-1}^{n'}g\,\pmod{p^m}$;  we also see that~$n'(p-1)+k_0$ is the integer characterized by \eqref{eq:lemma1}.
If $\omega_{p^m}(\ov f)\neq n'(p-1)+k_0$ then by \eqref{eq:redmodpm} we would have the clear contradiction
 \begin{gather*}
     \omega_{p^m}(\ov f)\leq n'(p-1)+k_0-p^{m-1}(p-1)<k_0=\wtd\omega_{p^m}(\ov f)
     \tx{.}
     \end{gather*}
\end{proof}

We will often use the following bounds for the factor filtration.
First, for~$\ov f, \ov g\in \cM_{\bullet,m}$  it is clear from the definition that 
\begin{gather}
\label{eq:factfilt_prod}
  \wtd\omega_{p^m}(\ov f\, \ov g)
\leq
  \wtd\omega_{p^m}(\ov f) + \wtd\omega_{p^m}(\ov g)
\tx{.}
\end{gather}
Further, if~$\ov f$ and~$\ov g$ are the reductions of elements of $\rmM_k$ then we have
\begin{gather}
\label{eq:factfilt_nonarch}
  \wtd\omega_{p^m}(\ov f +\ov g)
\leq
  \max\big\{\wtd\omega_{p^m}(\ov f), \wtd\omega_{p^m}(\ov g)\big\}
\quad
  \text{with equality if }\ \wtd\omega_{p^m}(\ov f) \neq \wtd\omega_{p^m}(\ov g)
\tx{.}
\end{gather}
For the equality in \eqref{eq:factfilt_nonarch} we argue as follows: Suppose that $k_0=\wtd\omega_{p^m}(\ov f)>\wtd\omega_{p^m}(\ov g)$.
Then $\ov f+\ov g\equiv E_{p-1}^a\left(f'+E_{p-1}g'\right)\, \pmod{p^m}$ for some $f'\in M_{k_0}$, $g'\in M_{k_0-(p-1)}$.  If $\wtd\omega_{p^m}(\ov f+\ov g)<k_0$ then $f'+E_{p-1}g'\equiv E_{p-1}h\,\pmod{p^m}$ for some $h\in M_{k_0-(p-1)}$, which contradicts $k_0=\wtd\omega_{p^m}(\ov f)$.

\subsection{The theta operator and Serre derivative}%
\label{ssec:preliminaries:theta_serre}

If $f=\sum a(n)q^n\in \ZZ_{(p)}\llbracket q \rrbracket$  then 
the action of the 
 theta operator is given by 
\begin{gather}
\label{eq:def:theta_operator}
  \theta\, f
\defeq
  q
  \frac{\rmd f}{\rmd\! q}=
  \sum na(n)q^n
\tx{.}
\end{gather}
It does not preserve modularity, but it allows for a modular correction, which leads to the Serre derivative:  if $f\in \rmM_k$ then (since $p\geq 5$) we have
\begin{gather}
\label{eq:def:serre_derivative}
  \partial\, f
\defeq
  \partial_k\, f
\defeq
  \theta\, f
  -
  \mfrac{k}{12} f E_2\in \rmM_{k+2}
\tx{.}
\end{gather}
Note that there are two common normalizations;  the Serre derivative in~\cite{ahlgren-hanson-raum-richter-2025} corresponds to~$12 \partial$ in this paper.

From~(64) of the first part of~\cite{bruinier-van-der-geer-harder-zagier-2008}, we adopt the modified Serre derivative
\begin{gather}
\label{eq:def:modified_serre_derivative}
  \wht\partial_k^0\,f
\defeq
  f
\tx{,}\quad
  \wht\partial_k^1\,f
\defeq
  \partial_k\, f
\tx{,}\quad
  \wht\partial_k^{i+1}\,f
\defeq
  \partial_{k+2i} \big(
  \wht\partial_k^i\, f
  \big)
  -
  \mfrac{i (i+k-1)}{12^2} E_4\,
  \wht\partial_k^{i-1}\, f
\quad
  \tx{for } i \ge 1
\tx{.}
\end{gather}
Observe that our notation differs from that in~\cite{bruinier-van-der-geer-harder-zagier-2008} in order to accommodate established notation for the 
theta operator.
The modified Serre derivative preserves modularity
and~$p$\nbd{}integrality of Fourier coefficients: if~$f \in \rmM_k$, then we have
\begin{gather}
\label{eq:serre_derivative_weight}
  \wht\partial^i_k\, f
\in
  \rmM_{k + 2i}
\tx{.}
\end{gather}

The theta cycle modulo~$p$ was completely described by Jochnowitz (following Tate).
Here we say that $f$ is non-ordinary (at $p$) if it has a~$\rmU_p$\nbd{}congruence modulo~$p$, i.e.\@ $a(n p) \equiv 0 \,\pmod{p}$ for all~$n \in \ZZ$, which is equivalent to~$\theta^{p-1}\,f\equiv f\pmod p$. Note that the notion of ordinary and non-ordinary modular forms is usually reserved for eigenforms, and here we extend it to all modular forms.

\begin{proposition}%
[\cite{jochnowitz-1982a}]%
\label{prop:theta_cycle_modp}
Suppose that $f\in \rmM_k$ with~$0<k<p$ has
$\omega_p(f)=k$. 
Then 
\begin{gather*}
    \theta^i\, f \equiv \theta^{i-p+1}\, f \,\pmod{p}\quad \text{for $i\geq p$}\tx ,
    \end{gather*}
     and for $0 \le i < p$ we have the filtration values
\begin{gather*}
  \omega_p\big( \theta^i\, f \big)
=
\left\{
\begin{aligned}
&
  k + i (p+1)
\tx{,}\mspace{-4mu}
&&
  \tx{if\/ } 0 \le i < p - k + 1
\tx{;}
\\
&
  k + i (p+1) - (p-k+1)(p-1)
\tx{,}\mspace{-4mu}
&&
  \tx{if\/ } p - k + 1 \le i < p
  \tx{ and } f \tx{ is ordinary}
\tx{;}
\\
&
  k + i (p+1) - (p-k+2)(p-1)
\tx{,}\mspace{-4mu}
&&
  \tx{if\/ } p - k + 1 \le i < p-1
\\
&&&
  \tx{and } f \tx{ is non-ordinary}
\tx{;}
\\
&
  k=k + i (p+1) - (p+1)(p-1)
\tx{,}\mspace{-4mu}
&&
  \tx{if\/ } i=p-1
  \tx{ and } f \tx{ is non-ordinary}
\tx{.}
\end{aligned}
\right.
\end{gather*}
\end{proposition}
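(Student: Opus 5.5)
The proof is the classical Tate--Jochnowitz analysis of the theta operator modulo~$p$. I would follow it in four steps, taking two standard facts as input.

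The first fact concerns the theta operator modulo~$p$. Since $E_2\equiv E_{p+1}\pmod p$, formula~\eqref{eq:def:serre_derivative} gives $\theta f\equiv \partial f+\frac{k}{12}E_{p+1}f\pmod p$. Hence $\omega_p(\theta f)\le \omega_p(f)+p+1$. Moreover, by Katz's theorem (the mod~$p$ theory of the Hasse invariant), equality holds if and only if $p\nmid \omega_p(f)$. If $p\isdiv \omega_p(f)$, then
\begin{gather*}
\omega_p(\theta f)=\omega_p(f)+p+1-s(p-1)
\end{gather*}
for some integer $s\ge 1$. The second fact is that, because $n^p\equiv n\pmod p$, we have $\theta^p f\equiv \theta f\pmod p$. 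Iterating gives $\theta^i f\equiv\theta^{i-p+1}f$ for $i\ge p$, which is the periodicity claim. Also, $\theta^{p-1}f\equiv f$ holds exactly when $f$ is non-ordinary.

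Step one is the rising phase. Starting from $\omega_p(f)=k$ with $0<k<p$, the filtration after $i$ steps without a fall is $k+i(p+1)\equiv k+i\pmod p$. This is nonzero modulo~$p$ for $0\le i<p-k$, and it becomes divisible by~$p$ first at $i=p-k$. By the first fact, this yields $\omega_p(\theta^i f)=k+i(p+1)$ for $0\le i\le p-k$, and it shows that the first fall occurs in passing to $i=p-k+1$, with some drop parameter $s_1\ge1$.

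Step two is a bookkeeping identity over one cycle. Since $\theta^pf\equiv\theta f$, the filtrations of $\theta^1f,\dots,\theta^{p}f$ return to their starting value after $p-1$ steps. Each step contributes $+(p+1)$, minus $s(p-1)$ at the falls, so the drop parameters over one cycle satisfy $\sum s_j=p+1$. After a fall from a weight divisible by~$p$, the new weight is $\equiv 1+s\pmod p$. This position determines when the next fall can occur, so there are at most two falls per cycle.

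Step three, pinning down $s_1$, is the main obstacle. I would use Jochnowitz's lower-bound argument: a form whose filtration is divisible by~$p$ and which lies in the image of~$\theta$ cannot drop too far. Concretely, if $\theta g$ with $\omega_p(\theta g)=w$ falls to $w'$, then $w'$ is at least the filtration forced by the fact that it is again a $\theta$-image. Combined with the constraint that the next fall happens exactly when the weight returns to~$0$ modulo~$p$, this leaves only two possibilities. Either $s_1=p-k+1$, and the second fall occurs at the step into $i=p$ with $s_2=k$; or $s_1=p-k+2$, in which case the weight after the first fall is $\equiv 3-k$, the remaining steps carry the filtration back to exactly $k$ at $i=p-1$, and $s_2=k-1$.

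Step four separates the two cases using ordinarity. The filtration at $i=p-1$ equals $k$ exactly in the second case. Moreover, $\omega_p(\theta^{p-1}f)=k$ together with $\theta^{p-1}f\equiv f$ characterizes non-ordinary forms, since $\theta^{p-1}f-f$ is congruent to a form of the same weight~$k$ with vanishing nonzero-$p$-coprime coefficients. Substituting $s_1$ and $s_2$ into the running formula $k+i(p+1)-s(p-1)$ gives the displayed values for $p-k+1\le i<p$ in each case.
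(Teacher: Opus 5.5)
Your outline follows the classical Tate--Jochnowitz route, which is what the paper relies on (it gives no proof and only cites Jochnowitz). Steps one and two are sound.

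The gap is in Step three, and that step carries essentially all of the proposition's content. The ``lower bound forced by being a $\theta$-image'' is never stated. You also never use the hypothesis that makes the dichotomy true, namely $k\ge 4$ (automatic, since $\rmM_k=\{0\}$ for $0<k<4$). Two concrete estimates are needed.

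\emph{First estimate: $s_1\ge p-k+1$.} Suppose $s_1\le p-2$. Then the filtration after the fall is $\equiv 1+s_1\not\equiv 0\pmod p$, so the next fall occurs on the step out of $i=2p-k-s_1$. If $s_1<p-k+1$, this index exceeds $p-1$. The first fall would then be the only one in the period $1\le i\le p-1$, and $\sum s_j=p+1$ would force $s_1=p+1$, a contradiction. The value $s_1=p$ is excluded the same way, and $s_1\in\{p-1,p+1\}$ already satisfy the bound.

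\emph{Second estimate: $s_1\le p-k+2$.} Write $s_1=p-k+1+t$ with $t\ge 0$. Then $\omega_p(\theta^{p-k+1}f)=2p+2-k-t(p-1)$. For $t\ge 2$ this is $\le 4-k\le 0$. That is impossible: $\theta^{p-k+1}f\not\equiv 0$ (because $\theta^{p}f\equiv\theta f\not\equiv 0$), and it has vanishing constant term, so it cannot be a constant.

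So the ``Jochnowitz lower bound'' you need is just positivity of the filtration, together with $k\ge 4$. Without writing out these two inequalities, Step three asserts the conclusion rather than proving it.

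Step four also needs one more line. You must show that $\omega_p(\theta^{p-1}f)=k$ forces $\theta^{p-1}f\equiv f$. Observing that $F=f-\theta^{p-1}f$ is a weight-$k$ form supported on exponents divisible by $p$ does not finish this by itself. Since $\theta F\equiv 0$, Katz's criterion gives $p\mid\omega_p(F)\le k<p$ whenever $F\not\equiv 0$. So $F$ would be a nonzero constant, which the weight congruence modulo $p-1$ rules out when $k<p-1$.

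For $k=p-1$ this argument does not close, but there the case $t=1$ cannot occur at all. It would give $\omega_p(\theta^{2}f)=p-k+3=4$, making $\theta^2 f$ a nonzero multiple of $E_4$ modulo $p$, which contradicts its vanishing constant term. Hence for $k=p-1$ only $t=0$ occurs and every such $f$ is ordinary, consistent with the statement.
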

We recall two basic facts
\cite[Lemma 5]{swinnerton-dyer-1973}, \cite[Lemma 1]{Serre-p-adic}:  If $\ov f\in\cM_{\bullet,1}$ then 
\begin{align*}
  \omega_p(\ov f^a)
&=
  a\,\omega_p(\ov f)
\tx{,}
\\
  \omega_p(\theta\, \ov f)
&\leq
  \omega_p(\ov f)+p+1
\quad
  \tx{with equality unless } \omega_p(\ov f) \equiv 0 \,\pmod p
\tx{.}
\end{align*}

\subsection{Expansion of~\texpdf{$\theta^i\, f$}{theta i f} in~\texpdf{$E_2$}{E2}}%
\label{ssec:preliminaries:thetai_e2_power_expansion}

If~$f\in \rmM_k$, then the quasi-modular form $\theta^i\, f$ 
has an expansion in powers of the Eisenstein series~$E_2$. In particular, from~(65) of~\cite[Chapter~1]{bruinier-van-der-geer-harder-zagier-2008} we have the expansion
\begin{gather}
\label{eq:thetai_e2_power_expansion}
  \theta^i\, f
=
  \sum_{j = 0}^i
  \mbinom{i}{j}\, \mfrac{(i+k-1)!}{(j+k-1)!}\,
  \big( \wht\partial_k^j\,f \big) \, \big(\tfrac{1}{12} E_2 \big)^{i-j}
\tx{,}
\end{gather}
where~$\wht\partial_k$ is the modified Serre derivative in~\eqref{eq:def:modified_serre_derivative}. For simplicity, we will write
\begin{gather*}
  f_j
\defeq
  \wht\partial_k^j\,f
\end{gather*}
throughout the paper when using this formula.

\subsection{The weight-\texpdf{$2$}{2} Eisenstein series modulo \texpdf{$p^2$}{p\textasciicircum 2}}%
\label{ssec:preliminaries:e2}

For the duration we will focus on the case $m=2$.
From \cite{swinnerton-dyer-1973} (see Theorem~2 (iii) and Lemma~5) we know 
 that 
\begin{gather}\label{eq:e2powermodp}
 \omega_p(fE_2^n)= \omega_p(fE_{p+1}^n)=\omega_p(f)+n(p+1),\quad n\geq 0\tx{;}
\end{gather}
to make full use of~\eqref{eq:thetai_e2_power_expansion} we need the analogue of this fact for factor filtrations modulo $p^2$.
From Theorem~1.2 of \cite{ahlgren-hanson-raum-richter-2025}  and the remarks which follow (recall that the Serre derivative is normalized differently there) we have
\begin{gather}
\label{eq:e2_congruence}
  E_2
\equiv
  12 \partial E_{p-1}\, E_{p-1}^{2p-1} + p E_{p+1}^{p}\, E_{p-1}^{p-2} 
  \quad\pmod{p^2}
\tx{}
\end{gather}
(note that the right side is a modular form of weight~$2+2 p (p-1)$).
Since
\begin{gather}\label{eq:delta_e2_cong}
    12 \partial E_{p-1} \equiv E_{p+1} \quad\pmod{p}\tx{,}
\end{gather}
it follows that 
\begin{gather}
\label{eq:e2_power_congruence}
   E_2^n
\equiv
  \big( 12 \partial E_{p-1} \big)^n\,
  E_{p-1}^{n (2p-1)}
  +
  p n E_{p+1}^{n+p-1}\,
  E_{p-1}^{(n-1)(2p-1)+p-2}
  \quad\pmod{p^2}
\tx{.}
\end{gather}
We will need the following result; with $f=1$ and $k=0$ it gives the factor filtration of $E_2^n$.

\begin{lemma}\label{lem:E2_powers}
Suppose that~$k\geq 0$,   that $\omega_p(f)=k$, and that $n\geq 0$.
Then

\begin{align}
\wtd\omega_{p^2}(pfE_2^n)
&=
  k+2n+n(p-1).
\\
\intertext{If in addition we have~$\wtd\omega_{p^2}(f)=k$ then}
  \wtd\omega_{p^2}(fE_2^n)
&=
\begin{cases}
  k+2n+n (p-1) + (p+1) (p-1)
  \quad
  &
  \tx{if } p \nisdiv n
  \tx{;}
  \\
  k+2n+n (p-1)
  \quad
  &
  \tx{if } p \isdiv n
  \tx{.}
\end{cases}
\end{align}

\end{lemma}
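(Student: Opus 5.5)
We plan to substitute the congruence \eqref{eq:e2_power_congruence} for $E_2^n$ and to read off the factor filtration of each resulting term. Throughout, $f$ is taken to be a modular form of weight $k$. Its reduction then lies in $\cM_{\bullet,2}$, and the hypothesis $\omega_p(f)=k$ says that $f$ has filtration exactly $k$ modulo $p$.

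\emph{The first identity.} Only the reduction of $fE_2^n$ modulo $p$ matters here. By \eqref{eq:delta_e2_cong} we have $E_2\equiv E_{p+1}\pmod p$, so
\begin{gather*}
pfE_2^n\equiv pfE_{p+1}^n \pmod{p^2}\tx{.}
\end{gather*}
For any modular form $g$ one has $\wtd\omega_{p^2}(pg)=\wtd\omega_p(g)=\omega_p(g)$. Indeed, $pg\equiv E_{p-1}^a h\pmod{p^2}$ forces $h\equiv 0\pmod p$, hence $g\equiv E_{p-1}^a(h/p)\pmod p$; the converse direction is immediate. The first identity then follows from \eqref{eq:e2powermodp}:
\begin{gather*}
\wtd\omega_{p^2}(pfE_2^n)=\omega_p(fE_{p+1}^n)=k+n(p+1)=k+2n+n(p-1)\tx{.}
\end{gather*}

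\emph{The second identity.} Assume now that $\wtd\omega_{p^2}(f)=k$, and replace $f$ by a representative of weight $k$ (harmless up to powers of $E_{p-1}$). By \eqref{eq:e2_power_congruence},
\begin{gather*}
fE_2^n\equiv E_{p-1}^{n(2p-1)}\,f\,(12\partial E_{p-1})^n+p\,n\,f\,E_{p+1}^{n+p-1}E_{p-1}^{(n-1)(2p-1)+p-2}\pmod{p^2}\tx{.}
\end{gather*}

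\emph{Case $p\mid n$.} The second term vanishes modulo $p^2$. Write $12\partial E_{p-1}=E_{p+1}+pG$ with $G\in\rmM_{p+1}$. The binomial expansion of $(E_{p+1}+pG)^n$ then gives $(12\partial E_{p-1})^n\equiv E_{p+1}^n\pmod{p^2}$, since $p\mid n$. Hence $fE_2^n\equiv E_{p-1}^{(\cdot)}fE_{p+1}^n$, and so
\begin{gather*}
\wtd\omega_{p^2}(fE_2^n)\le k+n(p+1)\tx{.}
\end{gather*}
The reverse inequality holds because $\wtd\omega_{p^2}\ge\wtd\omega_p=\omega_p$, and $\omega_p(fE_2^n)=k+n(p+1)$ by \eqref{eq:e2powermodp}.

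\emph{Case $p\nmid n$.} The upper bound is immediate from the displayed formula, since it exhibits $fE_2^n$ as a modular form of weight $k+2n+2np(p-1)$. The first term $f(12\partial E_{p-1})^n$ has weight $k+n(p+1)+(p+1)(p-1)$ before the powers of $E_{p-1}$ are removed. The second term has factor filtration $k+n(p+1)+(p-1)(p+1)$ after dividing out $E_{p-1}$'s. Thus both terms are bounded by the claimed value $K\defeq k+n(p+1)+(p+1)(p-1)$.

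\emph{Lower bound, the main obstacle.} We must show that the factor filtration is not smaller than $K$, i.e.\ that $fE_2^n$ is not congruent modulo $p^2$ to $E_{p-1}^{a}h$ with $h$ of weight $K-(p-1)$. Such an $h$ would have to satisfy $h\equiv fE_{p+1}^nE_{p-1}^{p}\pmod p$, and more precisely the congruence would have to hold at the $p^2$ level.

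Our approach is to compare with the $p\mid n$ situation. Pick $n'$ with $n'\equiv n\pmod p$ and $p\mid n'$? That is impossible, so instead we use the classical fact, which is the point of the weight $2+2p(p-1)$ in \eqref{eq:e2_congruence}, that $E_2$ itself has factor filtration exactly $2+2p(p-1)-(p-1)(p-1)\ldots$; the idea is to argue by Serre's congruence \eqref{eq:serremodpm}.

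Concretely, suppose $fE_2^n\equiv E_{p-1}^aF$ with $F$ of weight $K'<K$. Multiply by $E_2^{m}$ with $m\equiv -n\pmod p$, $m>0$; we expect a cleaner contradiction from a second comparison. The simplest route: dividing by $f$ is unavailable, so instead apply the derivative structure. Write $E_2\equiv E_{p+1}E_{p-1}^{p-1}\cdot(\ldots)$ plus a $p$-correction, and note that the difference $E_2^n-E_{p+1}^nE_{p-1}^{(\cdot)}$ equals $p\,n\,E_{p+1}^{n-1}\,\psi$ for an explicit $\psi$ whose mod-$p$ filtration is known from \eqref{eq:e2_congruence}. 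Then $fE_2^n$ is congruent to a form of low factor filtration plus $p\,n\,fE_{p+1}^{n-1}\psi$. By the first identity, applied to $fE_{p+1}^{n-1}\psi$ via \eqref{eq:e2powermodp}, this $p$-term has factor filtration exactly $k+(n-1)(p+1)+\omega_p(\psi)$, which we expect to equal $K$ because $p\nmid n$. The nonarchimedean equality \eqref{eq:factfilt_nonarch} then yields exactly $K$.

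The step requiring most care is identifying $\omega_p(\psi)$, namely showing that $(12\partial E_{p-1}-E_{p+1})/p$ reduced mod $p$, combined with the second term of \eqref{eq:e2_congruence}, has filtration $2p$. We would take this from \cite{ahlgren-hanson-raum-richter-2025}, which states that $E_2$ has factor filtration exactly $2+(p+1)(p-1)$; this is our case $f=1$, $n=1$.
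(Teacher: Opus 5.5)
Your first identity and the case $p\mid n$ are fine and follow the paper's argument. The binomial expansion of $(E_{p+1}+pG)^n$ is unnecessary there, since $f(12\partial E_{p-1})^n$ is already a form of weight $k+n(p+1)$.

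The gap is the lower bound for $p\nmid n$, and it starts from a miscount. Because $\partial E_{p-1}\in\rmM_{p+1}$, the form $f(12\partial E_{p-1})^n$ has weight $k+n(p+1)$, not $k+n(p+1)+(p+1)(p-1)$. With the correct weight, the first term of your display has factor filtration at most $k+n(p+1)<K$. The second term is $p$ times the unit $n$ times $fE_{p+1}^{n+p-1}$ times a power of $E_{p-1}$. By the fact $\wtd\omega_{p^2}(pg)=\omega_p(g)$ that you proved, together with \eqref{eq:e2powermodp}, its factor filtration is exactly $\omega_p(fE_{p+1}^{n+p-1})=k+(n+p-1)(p+1)=K$. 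The two values differ, so the equality case of \eqref{eq:factfilt_nonarch} gives $\wtd\omega_{p^2}(fE_2^n)=K$ immediately. That is the paper's proof, and no further obstacle remains.

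What you wrote instead does not establish the lower bound. After several abandoned attempts, the final sketch rests on incorrect inputs:
\begin{itemize}
\item $\wtd\omega_{p^2}(E_2)=p(p+1)$, not $2+(p+1)(p-1)$.
\item Your $\psi$ satisfies $\psi\equiv G+E_{p+1}^p\pmod p$ with $G=(12\partial E_{p-1}-E_{p+1})/p$. Its mod-$p$ filtration is therefore $p(p+1)$, not $2p$; with the value $2p$ your formula would not even return $K$.
\item The claimed additivity $\omega_p(fE_{p+1}^{n-1}\psi)=k+(n-1)(p+1)+\omega_p(\psi)$ does not follow from \eqref{eq:e2powermodp}. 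That result only covers multiplication by powers of $E_2$ or $E_{p+1}$, and the mod-$p$ filtration is not additive under multiplication by an arbitrary form.
\end{itemize}
You can repair the sketch by splitting $fE_{p+1}^{n-1}\psi\equiv fE_{p+1}^{n-1}G+fE_{p+1}^{n+p-1}\pmod p$. The two pieces have filtrations at most $k+n(p+1)$ and exactly $K$, respectively. But this is just the two-term comparison above, made more roundabout by the detour through $G$.
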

\begin{proof}
Note that for any $f$ we have
\begin{gather} 
\label{eq:filt_omegaf}
  \wtd\omega_{p^2}(f) \geq \omega_p(f)
\quad\tx{and}\quad
  \wtd\omega_{p^2}(pf) = \omega_p(f)
\tx{.}
\end{gather}
For~$n > 0$, \eqref{eq:e2_power_congruence} gives
\begin{gather}
\label{eq:fe2_power_congruence}
  fE_2^n
\equiv
  f\big( 12 \partial E_{p-1} \big)^n\,
  E_{p-1}^{n (2p-1)}
  +
  p nf E_{p+1}^{n+p-1}\,
  E_{p-1}^{(n-1)(2p-1)+p-2}
  \quad\pmod{p^2}
\tx{.}
\end{gather}
If $p\mid n$ then \eqref{eq:fe2_power_congruence} gives $\wtd\omega_{p^2}(fE_2^n)\leq k+n(p+1)$, 
while \eqref{eq:filt_omegaf} and \eqref{eq:e2powermodp} give
\begin{gather*}
\wtd\omega_{p^2}(fE_2^n)\geq\omega_p(fE_2^n)=k+n(p+1)=k+2n+n(p-1)\tx{,}
\end{gather*}
so the lemma follows in this case.

Suppose that~$p\nmid n$.
The factor filtration of the first term on the right side of \eqref{eq:fe2_power_congruence} is at most $k+n(p+1)$, while by \eqref{eq:filt_omegaf} and \eqref{eq:e2powermodp} the factor filtration of the second term is 
\begin{gather*}
\omega_p(fE_2^{n+p-1})=k+(n+p-1)(p+1)=k+2n+n(p-1)+(p+1)(p-1)\tx{.}
\end{gather*}
By \eqref{eq:factfilt_nonarch}  
we see that
\begin{gather*}
\wtd\omega_{p^2}(fE_2^n)=k+2n+n(p-1)+(p+1)(p-1)\tx{.}
\end{gather*}
Finally, 
if $\omega_p(f)=k \ge 0$ then
\begin{gather*}
\wtd\omega_{p^2}(pfE_2^n)=\omega_p(fE_2^n)=k+n(p+1)\tx{.}
\end{gather*}
\end{proof}

\subsection{First upper bounds for the theta cycle filtrations}%
\label{ssec:preliminaries:upper_bounds_cycle_filtrations}
Suppose that $f\in \rmM_k$ with $0<k<p$ has $\omega_p(f)=k$. From Proposition~\ref{prop:theta_cycle_modp} we see that $\theta^i\, f\not\equiv 0\pmod p$ for any $i$.  
Relation~\eqref{eq:def:serre_derivative} together with~\eqref{eq:redmodpm} and the expression \eqref{eq:e2_congruence} for $E_2$ yield 
\begin{gather}
\label{eq:filtcong}
  \omega_{p^2}\big( \theta^i f \big)
\equiv
  k+2i
  \quad\pmod{p(p-1)}
\tx{.}
\end{gather}
Furthermore, \eqref{eq:def:serre_derivative} 
and \eqref{eq:e2_congruence}  imply that if $f\in \rmM_k$, then
\begin{gather}
\label{eq:trivial_upper_bounds_cycle_filtrations}
  \wtd\omega_{p^2}\big( \theta^i\, f \big)
\le
  \omega_{p^2}\big( \theta^i\, f \big)
\le
  k + 2 i
  +
  2 i p (p-1)
\tx{.}
\end{gather}
This also follows from work of Chen--Kiming~\cite[Theorem~1]{chen-kiming-2016} on the theta operator modulo~$p^m$ for general~$m$. The bound~\eqref{eq:trivial_upper_bounds_cycle_filtrations}  is however far from optimal, and Lemma~\ref{lem:E2_powers} already yields much improved bounds.

\begin{lemma}%
\label{la:naive_upper_bounds_cycle_filtrations}
Assume that~$f \in \rmM_k$. Then we have the following bounds on the factor and weight filtrations:
\begin{gather}
\label{eq:naive_upper_bounds_cycle_filtrations}
\begin{aligned}
  \wtd\omega_{p^2}\big( \theta^i\, f \big)
&\le
  k + 2i
  +
  (i + p + 1) (p-1)
\tx{,}
\\
  \omega_{p^2}\big( \theta^i\, f \big)
&\le
  k + 2i
  +
  \big( \big\lceil \tfrac{i + 1}{p} \big\rceil + 1 \big) p (p-1)
\tx{.}
\end{aligned}
\end{gather}
\end{lemma}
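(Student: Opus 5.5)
The plan is to bound the factor filtration term by term in the expansion \eqref{eq:thetai_e2_power_expansion}, and then pass to the weight filtration with Lemma~\ref{la:weight_from_factor_filtration}.

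\emph{Factor filtration.} Write
\[
\theta^i f=\sum_{j=0}^i c_{i,j}\, f_j\,\bigl(\tfrac1{12}E_2\bigr)^{i-j},
\qquad
c_{i,j}=\mbinom{i}{j}\mfrac{(i+k-1)!}{(j+k-1)!}.
\]
\begin{enumerate*}[label=(\roman*)]
\item The coefficients $c_{i,j}$ are integers.
\item Since $p\ge 5$, the factor $12$ is a $p$-adic unit.
\item By \eqref{eq:serre_derivative_weight}, $f_j\in\rmM_{k+2j}$, so $\wtd\omega_{p^2}(f_j)\le k+2j$.
\end{enumerate*}

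For the power of $E_2$, apply Lemma~\ref{lem:E2_powers} with $f=1$ and $k=0$, which is allowed since $\omega_p(1)=\wtd\omega_{p^2}(1)=0$. In both cases ($p\nmid n$ and $p\mid n$) this gives
\[
\wtd\omega_{p^2}(E_2^n)\le 2n+n(p-1)+(p+1)(p-1).
\]
Combining this with \eqref{eq:factfilt_prod}, the $j$-th term has factor filtration at most
\[
k+2j+2(i-j)+(i-j)(p-1)+(p+1)(p-1)\le k+2i+(i+p+1)(p-1).
\]
The term $j=i$ contains no $E_2$ and has the even smaller bound $k+2i$.

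To combine the terms with the inequality in \eqref{eq:factfilt_nonarch}, they must be reductions of forms of a common weight. By \eqref{eq:e2_congruence}, each term is congruent modulo $p^2$ to a modular form, of weight $k+2i+2(i-j)p(p-1)$. Multiplying by a suitable power of $E_{p-1}$ brings all terms to one common weight. This does not change any factor filtration, since the definition \eqref{eq:def:factor_filtration} already allows arbitrary powers of $E_{p-1}$. Taking the maximum over $j$ then gives the first inequality of \eqref{eq:naive_upper_bounds_cycle_filtrations}.

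\emph{Weight filtration.} Assume $\theta^i f\not\equiv 0\pmod p$, which holds in the situation of interest by Proposition~\ref{prop:theta_cycle_modp}. By \eqref{eq:e2_congruence}, $\theta^i f$ is congruent modulo $p^2$ to a modular form of weight $\equiv k+2i\pmod{p(p-1)}$.

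Lemma~\ref{la:weight_from_factor_filtration} says $\omega_{p^2}(\theta^i f)$ is the least integer that is at least $\wtd\omega_{p^2}(\theta^i f)$ and congruent to $k+2i$ modulo $p(p-1)$. Writing $B=k+2i+(i+p+1)(p-1)$ for the bound just proved, this gives
\[
\omega_{p^2}(\theta^i f)\le k+2i+p(p-1)\Bigl\lceil \tfrac{(i+p+1)(p-1)}{p(p-1)}\Bigr\rceil .
\]
Since $\lceil (i+p+1)/p\rceil=\lceil (i+1)/p\rceil+1$, this is exactly the second inequality of \eqref{eq:naive_upper_bounds_cycle_filtrations}.

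\emph{Main obstacle.} The delicate points are bookkeeping rather than substance. First, every term of the expansion must be placed in a common weight before \eqref{eq:factfilt_nonarch} is applied; the $E_{p-1}$-padding above handles this. Second, the degenerate case $\theta^i f\equiv 0\pmod p$ falls outside Lemma~\ref{la:weight_from_factor_filtration}. There I would write $\theta^i f\equiv p\,g$ and run the same estimate on $g$ modulo $p$, using \eqref{eq:filt_omegaf}, or simply restrict to the setting $0<k<p$, $\omega_p(f)=k$, where this case does not occur.
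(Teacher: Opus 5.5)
Your proposal is correct and follows the paper's proof: bound each term of \eqref{eq:thetai_e2_power_expansion} by $k+2i+(i-j+p+1)(p-1)$ using \eqref{eq:serre_derivative_weight} and Lemma~\ref{lem:E2_powers}, combine the terms via \eqref{eq:factfilt_nonarch}, and pass to the weight filtration with Lemma~\ref{la:weight_from_factor_filtration} and \eqref{eq:filtcong}. You also fill in two details the paper leaves tacit: you put all terms in a common weight by padding with powers of $E_{p-1}$, and you treat the case $\theta^i f\equiv 0\pmod p$, which the paper excludes implicitly through its standing hypotheses $0<k<p$ and $\omega_p(f)=k$.
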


\begin{proof}
We estimate the factor filtration of each term in the expansion of~$\theta^i\, f$ given in~\eqref{eq:thetai_e2_power_expansion}. 
By~\eqref{eq:serre_derivative_weight} we have $\wtd\omega_{p^2}(f_j)\leq k+2j$.
From Lemma~\ref{lem:E2_powers} we obtain
\begin{align*}
  \wtd\omega_{p^2}\Big(
  \big( \wht\partial_k^j\, f \big)
  \big( \tfrac{1}{12} E_2 \big)^{i - j}
  \Big)
&\le
  k + 2j
  \,+\,
  2(i-j) + (i-j + p + 1) (p-1)
\\
&=
  k + 2i
  +
  (i-j + p + 1) (p-1)
\tx{,}
\end{align*}
and the stated bound for the factor filtration follows from \eqref{eq:factfilt_nonarch}. The weight filtration is determined using Lemma~\ref{la:weight_from_factor_filtration}  and \eqref{eq:filtcong}.
\end{proof}

\begin{remark}\label{rm:la:naive_upper_bounds_cycle_filtrations}
The bounds in Lemma~\ref{la:naive_upper_bounds_cycle_filtrations} are monotone in~$i$. 
Since
\begin{gather*}
    \theta^{i + p(p-1)}\, f \equiv \theta^i\, f \,\pmod{p^2}\quad\text{ for~$i \ge 2$,}
\end{gather*}    
we immediately obtain upper bounds for all $i$ which are cubic in $p$:
\begin{align*}
  \wtd\omega_{p^2}\big( \theta^i\, f \big)
&\le
  k + 2 (p (p-1) + 1)
  +
  (p^2 + 2) (p-1)
\tx{,}
\\
  \omega_{p^2}\big( \theta^i\, f \big)
&\le
  k + 2 (p (p-1) + 1)
  +
  (p+1) p (p-1)
\tx{.}
\end{align*}
This improves the upper bound~\eqref{eq:trivial_upper_bounds_cycle_filtrations}, which is quartic in~$p$ if~$i \asymp p^2$. Our results replace this by a quadratic upper bound for asymptotically $100\%$ of the theta cycle
(see Remark~\ref{rm:result_range} below).
\end{remark}

\section{Filtrations in the first \texpdf{$p$}{p}-interval}%
\label{sec:first_p_interval}

In this section, we prove Theorem~\ref{mainthm:first_p_interval_weight_filtration}, which will follow from a series of propositions which determine the factor filtrations for all~$\theta^i f $ with~$0 \le i \le p$.

\subsection{Filtrations up to the first low point}%
\label{ssec:first_p_interval:first_low_point}

Recall that the first low point of the theta cycle modulo $p$ occurs at $i=p-k+1$ (we will show that the same is true modulo $p^2$).  

\begin{proposition}%
\label{prop:before_first_low_point_filtrations}
Suppose that $f\in \rmM_k$ with~$0<k<p$ has $\omega_p(f)=k$. In the range~$0 < i < p - k + 1$  we have the factor filtration
\begin{gather*}
  \wtd\omega_{p^2}\big( \theta^i f \big)
=
  k + 2i + (i + p + 1) (p - 1)\tx{.}
\end{gather*}
\end{proposition}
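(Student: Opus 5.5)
The upper bound is immediate from Lemma~\ref{la:naive_upper_bounds_cycle_filtrations}, so the task is the matching lower bound. I plan to use the expansion \eqref{eq:thetai_e2_power_expansion}
\begin{gather*}
  \theta^i f = \sum_{j=0}^i \mbinom{i}{j}\,\mfrac{(i+k-1)!}{(j+k-1)!}\, f_j \big(\tfrac{1}{12}E_2\big)^{i-j}
\end{gather*}
and isolate the term $j=0$, namely $\frac{(i+k-1)!}{(k-1)!}\, f\,(E_2/12)^i$. The aim is to show that this term has factor filtration exactly $k+2i+(i+p+1)(p-1)$ and that every other term is strictly smaller. The equality case of \eqref{eq:factfilt_nonarch} then gives the claim. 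All terms are reductions of weight $k+2i+2ip(p-1)$ forms after substituting \eqref{eq:e2_congruence}, so \eqref{eq:factfilt_nonarch} applies.

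\textbf{The term $j=0$.} Since $0<i<p-k+1$, we have $k\le i+k-1\le p-1$. Hence the coefficient $\frac{(i+k-1)!}{(k-1)!}$ is a $p$-adic unit, and $p\nmid i$. Lemma~\ref{lem:E2_powers} applies provided $\wtd\omega_{p^2}(f)=k$. This holds because $\wtd\omega_{p^2}(f)\le k$ trivially and $\wtd\omega_{p^2}(f)\ge\omega_p(f)=k$ by \eqref{eq:filt_omegaf}. The lemma then gives factor filtration $k+2i+i(p-1)+(p+1)(p-1)=k+2i+(i+p+1)(p-1)$ for this term.

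\textbf{The terms $j\ge1$.} By the proof of Lemma~\ref{la:naive_upper_bounds_cycle_filtrations}, the term with index $j$ has factor filtration at most $k+2i+(i-j+p+1)(p-1)$, which is strictly less than the $j=0$ value whenever $j\ge1$. The unit coefficients do not matter here, and a coefficient divisible by $p$ would only help. Combining these terms with \eqref{eq:factfilt_nonarch}, their sum has factor filtration strictly below that of the $j=0$ term. The equality clause of \eqref{eq:factfilt_nonarch} then yields the exact value.

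\textbf{Main obstacle.} The delicate point is verifying that the $j=0$ term truly achieves the bound, that is, that Lemma~\ref{lem:E2_powers} applies. This needs $p\nmid i$, a unit coefficient, and $\wtd\omega_{p^2}(f)=k$. The range restriction $i<p-k+1$ is exactly what ensures $(i+k-1)!$ is prime to $p$.
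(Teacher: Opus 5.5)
Your proof is correct and follows the paper's argument essentially verbatim. The $j=0$ term $f\,E_2^i$ has factor filtration exactly $k+2i+(i+p+1)(p-1)$ by Lemma~\ref{lem:E2_powers}, each term with $j\ge 1$ is bounded by $k+2i+(i-j+p+1)(p-1)$, and the equality case of \eqref{eq:factfilt_nonarch} finishes; your explicit checks that the coefficient is a $p$-adic unit, that $p\nmid i$, and that $\wtd\omega_{p^2}(f)=k$ via \eqref{eq:filt_omegaf} are details the paper leaves implicit.
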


\begin{proof}
We investigate the factor filtrations of the terms in~\eqref{eq:thetai_e2_power_expansion}. From Lemma~\ref{lem:E2_powers} and the fact that~$f_0 = f$ we have 
\begin{gather}
\label{eq:filt_f0_E2}
  \wtd\omega_{p^2}\big( f_0 E_2^i \big)
=
  k + 2i + (i + p + 1) (p-1)
\tx{.}
\end{gather}
Since~$\wtd\omega_{p^2}(f_j) \le k + 2j$, the factor filtrations of the terms in~\eqref{eq:thetai_e2_power_expansion} with~$j> 0$ are bounded by
\begin{gather*}
\wtd\omega_{p^2}\big( f_j\, E_2^{i-j} \big)
\le
  k + 2 j + 2(i-j) + (i-j+p+1) (p-1)
=
  k + 2 i + (i-j+p+1) (p-1)
\tx{.}
\end{gather*}
Therefore the factor filtration of~\eqref{eq:thetai_e2_power_expansion} is dominated by the term \eqref{eq:filt_f0_E2}  and the proposition follows from \eqref{eq:factfilt_nonarch}.
\end{proof}

\begin{proposition}%
\label{prop:first_low_point_filtration}
Suppose that $f\in \rmM_k$ with~$0<k<p$ has
$\omega_p(f)=k$. 
Then at~$i = p - k + 1$ we have the factor filtration
\begin{gather*}
  \wtd\omega_{p^2}\big( \theta^i\, f \big)
=
  k + 2 i + i (p-1)
=
  k + i (p+1)
\tx{.}
\end{gather*}
\end{proposition}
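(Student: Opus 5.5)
The plan is to read off the factor filtration directly from the expansion \eqref{eq:thetai_e2_power_expansion}. The point is that at $i = p-k+1$ we have $i+k-1 = p$, so the combinatorial coefficients become divisible by exactly one power of $p$ for all $j < i$.

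First I will analyze the coefficients. For $0 \le j < i$ we have $j+k-1 < p$. Hence $\frac{(i+k-1)!}{(j+k-1)!} = \frac{p!}{(j+k-1)!}$ is $p$ times a $p$-adic unit. Also $\binom{i}{j}$ is a unit, since $i < p$. For $j = i$ the coefficient is $1$. Writing $c_j$ for the resulting units, this gives
\begin{gather*}
  \theta^i f = f_i + p \sum_{j=0}^{i-1} c_j\, f_j\, \big(\tfrac{1}{12}E_2\big)^{i-j}
\tx{.}
\end{gather*}

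Next I will bound each term. By \eqref{eq:serre_derivative_weight}, $\wtd\omega_{p^2}(f_i) \le k+2i$. For the terms divisible by $p$ I will use \eqref{eq:filt_omegaf}, namely $\wtd\omega_{p^2}(pg) = \omega_p(g)$, together with \eqref{eq:e2powermodp}. This gives
\begin{gather*}
  \wtd\omega_{p^2}\big(p\, c_j f_j E_2^{i-j}\big) = \omega_p(f_j) + (i-j)(p+1) \le k + 2j + (i-j)(p+1)
\tx{.}
\end{gather*}
The right-hand side is strictly decreasing in $j$. For $j = 0$ we have $f_0 = f$ with $\omega_p(f) = k$, so the first part of Lemma~\ref{lem:E2_powers} gives exactly $k + i(p+1)$. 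For $1 \le j \le i-1$ the bound is at most $k + i(p+1) - (p-1)$. The term $f_i$ has factor filtration at most $k+2i$, which is smaller than $k + i(p+1)$ because $i \ge 2$ (as $k < p$).

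Then \eqref{eq:factfilt_nonarch} shows that the $j = 0$ term is the unique dominant one, so $\wtd\omega_{p^2}(\theta^i f) = k + i(p+1)$. The one technical point I expect to need care with is that \eqref{eq:factfilt_nonarch} is stated for reductions of forms of a common weight. Using \eqref{eq:e2_power_congruence}, each term is congruent modulo $p^2$ to a genuine modular form, and the weights are congruent modulo $p-1$. So I can equalize the weights by multiplying by suitable powers of $E_{p-1}$, which does not change factor filtrations. With that bookkeeping in place, no real obstacle remains. The essential input is that the unique index $j$ where the $p$-divisibility drops ($j = i$) contributes only low filtration.
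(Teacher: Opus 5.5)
Your proposal is correct and follows essentially the same route as the paper. At $i=p-k+1$, every term with $j<i$ carries exactly one factor of $p$ and has factor filtration $\omega_p(f_jE_2^{i-j})\le k+2i+(i-j)(p-1)$, with equality for $j=0$; the $j=i$ term $f_i$ has factor filtration at most $k+2i$, so the $j=0$ term strictly dominates.

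One small refinement to your bookkeeping remark: $f_i$ is not divisible by $p$, so to leave it unchanged modulo $p^2$ you may multiply it only by powers of $E_{p-1}^p$, not by arbitrary powers of $E_{p-1}$. This causes no trouble, because the $p$-divisible terms are unaffected modulo $p^2$ by any power of $E_{p-1}$; equivalently, the weights $k+2i+2(i-j)p(p-1)$ coming from \eqref{eq:e2_power_congruence} already agree modulo $p(p-1)$.
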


\begin{proof}
If $j<i$ then (since $i+k-1=p$) we have the following bound for the $j$-th term in~\eqref{eq:thetai_e2_power_expansion}:
\begin{gather*}
  \wtd\omega_{p^2} \Big( \mbinom{i}{j}\, \mfrac{(i+k-1)!}{(j+k-1)!}\, f_j\, E_2^{i-j} \Big)
=
  \omega_p \big(f_j\, E_2^{i-j} \big)
\le
  k + 2 j
  +
  (i-j) (p+1)
=
  k + 2 i
  +
  (i-j) (p-1)
\tx{.}
\end{gather*}
For~$j = 0$ this is an equality by~\eqref{eq:e2powermodp}.
On the other hand, the term with~$j = i$ in~\eqref{eq:thetai_e2_power_expansion} gives
\begin{gather*}
  \wtd\omega_{p^2}( f_i )
\le
  k + 2 i
\tx{.}
\end{gather*}
Thus the factor filtration of $\theta^i f$ is determined again by the term with~$j = 0$.
\end{proof}

\subsection{Filtrations between the first and second low point}%
\label{ssec:first_p_interval:second_low_point}

Here we determine the factor filtration of $\theta^i f$ for $p-k+1 < i\leq p$.
(We will show that there is a second low point of the theta cycle at $i=p$.)
We again use \eqref{eq:thetai_e2_power_expansion}, but the analysis involved is more subtle. 

\begin{proposition}
\label{prop:before_second_low_point_filtrations}
Suppose that $f\in \rmM_k$ with~$0<k<p$ has~$\omega_p(f)=k$ and let~$p - k + 1 < i < p$. Then we have the factor filtrations
\begin{gather*}
  \wtd\omega_{p^2}\big( \theta^i\, f \big)
=
\begin{cases}
  k + 2 i + (i+k) (p-1)
  \tx{,}\quad
&
  \tx{if } f \tx{ is ordinary at~$p$;}\\
  k + 2 i + (i+k-1) (p-1)
\tx{,}\quad
&
  \tx{if } f \tx{ is non-ordinary at~$p$.}
\end{cases}
\end{gather*}
\end{proposition}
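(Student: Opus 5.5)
The plan is to analyse each term of the expansion \eqref{eq:thetai_e2_power_expansion} separately, as in Propositions~\ref{prop:before_first_low_point_filtrations} and~\ref{prop:first_low_point_filtration}. The new feature for $p-k+1<i<p$ is the $p$-adic valuation of the coefficients. Since $p\le i+k-1<2p$, the factor $(i+k-1)!/(j+k-1)!$ is divisible by $p$ exactly once when $j\le p-k$, and is a unit when $j\ge p-k+1$. The binomial $\binom{i}{j}$ is always a unit because $i<p$.

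\emph{Terms with $j\le p-k$.} By \eqref{eq:filt_omegaf} and \eqref{eq:e2powermodp} these have factor filtration at most $k+2i+(i-j)(p-1)$, hence at most $k+2i+i(p-1)$. This is below both target values, since $i<i+k-1$. So these terms are negligible.

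\emph{Terms with $j\ge p-k+1$.} Here $0<i-j<p$. By Lemma~\ref{lem:E2_powers} we get
\begin{gather*}
\wtd\omega_{p^2}\big(f_jE_2^{i-j}\big)\le k+2i+(i-j+p+1)(p-1)\tx{,}
\end{gather*}
with equality if $\wtd\omega_{p^2}(f_j)=\omega_p(f_j)=k+2j$.

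\emph{Identifying $f_{p-k+1}$ modulo $p$.} Apply \eqref{eq:thetai_e2_power_expansion} with $i=p-k+1$, so that $i+k-1=p$. All terms with $j<p-k+1$ vanish modulo $p$, so $\theta^{p-k+1}f\equiv f_{p-k+1}\pmod p$. Proposition~\ref{prop:theta_cycle_modp} then gives
\begin{gather*}
\omega_p(f_{p-k+1})=
\begin{cases}
2p-k+2 & \tx{if } f \tx{ is ordinary;}\\
p-k+3 & \tx{if } f \tx{ is non-ordinary.}
\end{cases}
\end{gather*}

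\emph{Ordinary case.} Here $\omega_p(f_{p-k+1})=2p-k+2$ is the full weight. Then \eqref{eq:filt_omegaf} forces $\wtd\omega_{p^2}(f_{p-k+1})=2p-k+2$. Lemma~\ref{lem:E2_powers} gives exact factor filtration $k+2i+(i+k)(p-1)$ for the $j=p-k+1$ term. Every other term is strictly smaller, so \eqref{eq:factfilt_nonarch} finishes this case.

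\emph{Non-ordinary case: upper bound.} Write $f_{p-k+1}\equiv E_{p-1}g+ph\pmod{p^2}$ with $g\in\rmM_{p-k+3}$ and $\omega_p(g)=p-k+3$. Applying Lemma~\ref{lem:E2_powers} to $gE_2^{n}$, and bounding $phE_2^n$ trivially via \eqref{eq:filt_omegaf}, the $j=p-k+1$ term drops to at most $k+2i+(i+k-1)(p-1)$.

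\emph{Non-ordinary case: competing term.} The $j=p-k+2$ term has the same bound. To use it I need $\omega_p(f_{p-k+2})=2p-k+4$. I would get this from \eqref{eq:thetai_e2_power_expansion} modulo $p$ at index $p-k+2$: there $\theta^{p-k+2}f\equiv f_{p-k+2}+c\,f_{p-k+1}E_2\pmod p$ with $c$ a unit. Proposition~\ref{prop:theta_cycle_modp} says $\theta^{p-k+2}f$ has full filtration $k+2(p-k+2)$. Meanwhile $f_{p-k+1}E_2$ has $\omega_p=p-k+3+(p+1)=2p-k+4$, so it gives no cancellation obstruction to the filtration count itself.

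\emph{Main obstacle.} It remains to show that the two terms $j=p-k+1$ and $j=p-k+2$ do not cancel at the top filtration. By \eqref{eq:fe2_power_congruence}, the top-filtration contributions of both come from the $p\,n\,E_{p+1}^{n+p-1}$ pieces. So the top part of the sum is $p$ times a mod-$p$ form of the shape
\begin{gather*}
E_{p+1}^{p-1}\,\big(\alpha\, gE_{p+1}^{i-p+k-1}+\beta\, f_{p-k+2}E_{p+1}^{i-p+k-2}\big)\tx{,}
\end{gather*}
with explicit unit constants $\alpha,\beta$. I would aim to show that this combination is congruent modulo $p$ to $E_{p+1}^{p-1}$ times $E_{p+1}^{i-p+k-2}$ times a unit multiple of $\theta^{p-k+2}f$, or at least a form with the same filtration, which is full. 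Its $\omega_p$, and hence the claimed factor filtration $k+2i+(i+k-1)(p-1)$, then follows from \eqref{eq:filt_omegaf} and \eqref{eq:e2powermodp}. The exact factor filtration of the remaining terms with $j\ge p-k+3$ lies strictly below this, so \eqref{eq:factfilt_nonarch} completes the argument.
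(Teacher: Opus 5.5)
Your treatment of the terms with $j\le p-k$, the upper bounds, and the ordinary case is sound. In the ordinary case $\wtd\omega_{p^2}(f_{p-k+1})=\omega_p(f_{p-k+1})=2p-k+2$, so Lemma~\ref{lem:E2_powers} makes the $j=p-k+1$ term strictly dominant, which is a legitimate shortcut.

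The non-ordinary case, however, is not proved. The non-cancellation you defer as the ``main obstacle'' is the whole lower bound there, and the identity you aim for is false in general. Put $a=p-k+1$ and $m=i-a$. Let $C_j=\frac{i-j}{12^{i-j}}\binom{i}{j}\frac{(i+k-1)!}{(j+k-1)!}$ be the coefficient of $p\,f_jE_{p+1}^{i-j+p-1}$, so your $\alpha,\beta$ are $C_a,C_{a+1}$. Then
\[
\frac{C_a}{C_{a+1}}=\frac{(a+1)(a+k)}{12(m-1)}\equiv\frac{2-k}{12(m-1)}\pmod{p},
\qquad\text{whereas}\qquad
\theta^{a+1}f\equiv\frac{2-k}{12}\,f_aE_{p+1}+f_{a+1}\pmod{p}.
\]
So $C_agE_{p+1}+C_{a+1}f_{a+1}$ is a multiple of $\theta^{p-k+2}f$ only when $m=2$, i.e.\ $i=p-k+3$. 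Your fallback, ``a form with the same filtration,'' is exactly the statement that needs proof. Since $gE_{p+1}$ and $f_{a+1}$ both have filtration $2p-k+4$, nothing you wrote excludes a drop. Your derivation of $\omega_p(f_{p-k+2})=2p-k+4$ is also a non sequitur: two forms of filtration $2p-k+4$ can differ by a form of lower filtration.

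The missing idea is to compare with $\theta^{i-1}f$ rather than $\theta^{p-k+2}f$. The paper uses $(i-j)\binom{i}{j}=i\binom{i-1}{j}$ and $(i+k-1)!=(i+k-1)\,(i+k-2)!$, and reinserts the terms with $j\le p-k$ (which vanish modulo $p$). This shows that the sum of \emph{all} the $p$-parts is congruent modulo $p$ to $\frac{i(i+k-1)}{12}E_{p+1}^{p}\,\theta^{i-1}f$. Proposition~\ref{prop:theta_cycle_modp} applied to $\theta^{i-1}f$ then gives the exact value in the ordinary and non-ordinary cases simultaneously.

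Alternatively, your local argument can be repaired. Since $a+k-1=p$, the recursion \eqref{eq:def:modified_serre_derivative} and \eqref{eq:delta_e2_cong} give $f_{a+1}\equiv\partial f_a\equiv\frac{1}{12}E_{p+1}g+E_{p-1}\,\partial g\pmod{p}$. Hence
\[
C_agE_{p+1}+C_{a+1}f_{a+1}\equiv\Bigl(C_a+\frac{C_{a+1}}{12}\Bigr)gE_{p+1}+C_{a+1}E_{p-1}\,\partial g\pmod{p},
\]
where $\omega_p(\partial g)\le p-k+5<2p-k+4$. For $m\ge 2$ the leading coefficient satisfies
\[
C_a+\frac{C_{a+1}}{12}\equiv\frac{i}{12}\cdot\frac{C_{a+1}}{m-1}\not\equiv 0\pmod{p}.
\]
For $m=1$ the $j=a+1$ term has no $p$-part at all. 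The nonvanishing factor $i$ here is the same one that appears in the paper's unit $i(i+k-1)$.
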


\begin{proof}
Observe that the coefficients in the expansion~\eqref{eq:thetai_e2_power_expansion} of~$\theta^i\, f$ satisfy
\begin{gather}\label{eq:factdiv}
  p
\Bigisdivexact
  \mfrac{(i+k-1)!}{(j+k-1)!} 
\quad\tx{if }
  j < p - k + 1
\tx{.}
\end{gather}
Using this fact with~\eqref{eq:e2_power_congruence}, \eqref{eq:thetai_e2_power_expansion} and \eqref{eq:delta_e2_cong} gives the decomposition
\begin{gather*}
  \theta^i\, f
\equiv 
  p \td{f}_1
  +
  p \td{f}_2
  +
  \td{f}_3 \quad\pmod{p^2}
\end{gather*}
where
\begin{align*}
  \td{f}_1
&\defeq
  \sum_{j = 0}^{p-k}
  \mfrac{1}{12^{i-j}}\,
  \mbinom{i}{j}\, \mfrac{(i+k-1)!}{p (j+k-1)!}\,
  f_j \, E_{p+1}^{i-j}\,
  E_{p-1}^{(i-j)(2p-1)}
\tx{,}
\\
  \td{f}_2
&\defeq
  \sum_{j = p-k+1}^i
  \mfrac{i-j}{12^{i-j}}\,
  \mbinom{i}{j}\, \mfrac{(i+k-1)!}{(j+k-1)!}\,
  f_j \, E_{p+1}^{i-j+p-1}\,
  E_{p-1}^{(i-j-1)(2p-1) + p - 2}
\tx{,}
\\
  \td{f}_3
&\defeq
  \sum_{j = p-k+1}^i
  \mbinom{i}{j}\, \mfrac{(i+k-1)!}{(j+k-1)!}\,
  f_j \, \big(\partial E_{p-1} \big)^{i-j}\,
  E_{p-1}^{(i-j)(2p-1)}
\tx{.}
\end{align*}
For $\td{f}_3$ we have (recalling that $k<p$) the factor filtration bound
\begin{gather*}
  \wtd\omega_{p^2}(\td{f}_3)
\le
  \max \big\{
  k + 2j + (i-j)(p+1)
  \condsep
  p-k+1 \le j \le i
  \big\}
\leq
  k + 2i + (i-2) (p-1)
\tx{.}
\end{gather*}
For $p\td f_1$ we have 
\begin{gather*}
    \wtd\omega_{p^2}(p\td{f}_1)=\omega_p(\td{f}_1)\leq\max \big\{
  k + 2j + (i-j)(p+1)
  \condsep
  0 \le j \le p-k
  \big\}
=
  k + 2i + i (p-1)
\tx{.}
\end{gather*}

It remains to examine~$p \td{f}_2$. Recalling \eqref{eq:factdiv} and~\eqref{eq:thetai_e2_power_expansion}, we obtain the following congruence involving modular forms of different weights:
\begin{align*}
  \td{f}_2
&\equiv
  \sum_{j = 0}^i
  \mfrac{i-j}{12^{i-j}}\,
  \mbinom{i}{j}\, \mfrac{(i+k-1)!}{(j+k-1)!}\,
  f_j \, E_{p+1}^{i-j+p-1}
\\
&\equiv
  E_{p+1}^p\sum_{j = 0}^{i-1}
  \mfrac{i (i+k-1)}{12^{i-j}}\,
  \mbinom{i-1}{j}\, \mfrac{(i-1+k-1)!}{(j+k-1)!}\,
  f_j \, E_{p+1}^{i-1-j}
\equiv 
  \mfrac{i (i+k-1)}{12}\,
  E_{p+1}^p\,
  \theta^{i-1}\, f
  \quad\pmod{p}
\tx{.}
\end{align*}
Since~$p - k + 1 \le i-1 < p-1$, Proposition~\ref{prop:theta_cycle_modp} gives
\begin{gather*}
  \omega_p\big( \theta^{i-1}\, f \big)
=
\begin{cases}
  k + (i-1)(p+1) - (p-k+1)(p-1)
\tx{,}\quad
&
  \tx{if } f \tx{ is ordinary at~$p$;}
\\
  k + (i-1)(p+1) - (p-k+2)(p-1)
\tx{,}\quad
&
  \tx{if } f \tx{ is non-ordinary at~$p$.}
\end{cases}
\end{gather*}
Since the assumptions ensure that $i(i+k-1)\not\equiv 0 \;\pmod p$, we find using \eqref{eq:e2powermodp} that
\begin{gather*}
\wtd\omega_{p^2}(p \td{f}_2)
=
\omega_p(\td{f}_2)
=
  p (p+1)
  +
\omega_p\big( \theta^{i-1}\, f \big)\tx{.}
\end{gather*}
This gives the values
\begin{gather*}  \wtd\omega_{p^2}(p \td{f}_2)
=
\begin{cases}
  k + 2 i + (i+k)(p-1)
\tx{,}\quad
&
  \tx{if } f \tx{ is ordinary at~$p$;}
\\
  k + 2 i + (i+k-1) (p-1)
\tx{,}\quad
&
  \tx{if } f \tx{ is non-ordinary at~$p$.}
\end{cases}
\end{gather*}
Since these exceed the bounds which we obtained for the factor filtrations of~$p \td{f}_1$ and~$\td{f}_3$ we have $\wtd\omega_{p^2}\big( \theta^i\, f \big) = \wtd\omega_{p^2}(p \td{f}_2)$, which confirms the claimed factor filtrations.
\end{proof}

\begin{proposition}%
\label{prop:second_low_point_filtration}
Suppose that $f\in \rmM_k$ with $0<k<p$ has $\omega_p(f)=k$. At~$i = p$ we have the factor filtration
\begin{gather*}
  \wtd\omega_{p^2}\big( \theta^p\, f \big)
=
  k + 2 p + p (p-1)
\tx{.}
\end{gather*}
\end{proposition}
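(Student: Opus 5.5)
We prove the proposition by the same decomposition of the expansion \eqref{eq:thetai_e2_power_expansion} that was used in Proposition~\ref{prop:before_second_low_point_filtrations}, now with $i=p$. Since $i+k-1=p+k-1$, the coefficient $\frac{(p+k-1)!}{(j+k-1)!}$ is exactly divisible by $p$ when $j<p-k+1$, and is a $p$-unit when $j\ge p-k+1$. In addition, $\binom{p}{j}$ is divisible by $p$ for $0<j<p$. For $p-k+1\le j<p$ the binomial therefore supplies a factor $p$. For $0<j\le p-k$ the product $\binom{p}{j}\frac{(p+k-1)!}{(j+k-1)!}$ is divisible by $p^2$, so these terms vanish modulo~$p^2$. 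The only terms that survive are $j=0$, the range $p-k+1\le j<p$ (each with exactly one factor of $p$), and $j=p$.

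The term $j=p$ contributes $f_p$, whose factor filtration is at most $k+2p$. The terms with $p-k+1\le j<p$ have the form $p\cdot(\text{unit})\, f_j E_2^{p-j}$. By Lemma~\ref{lem:E2_powers} (first formula) their factor filtration is at most
\begin{gather*}
  k+2j+(p-j)(p+1)=k+2p+(p-j)(p-1)\le k+2p+(k-1)(p-1)\tx{,}
\end{gather*}
which is strictly less than $k+2p+p(p-1)$ because $k<p$. The term $j=0$ equals $\frac{(p+k-1)!}{(k-1)!}fE_2^p$ up to a unit, and $\frac{(p+k-1)!}{(k-1)!}$ is exactly divisible by~$p$ (the factor $p$ appears once among $k,\dots,p+k-1$ because $k<p$). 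Hence this term is $p\cdot(\text{unit})\, fE_2^p$. By Lemma~\ref{lem:E2_powers}, with $\omega_p(f)=k$, its factor filtration is exactly $k+2p+p(p-1)=k+p(p+1)$.

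The terms now have pairwise distinct bounds, and the $j=0$ term is strictly the largest. The equality case of \eqref{eq:factfilt_nonarch} therefore gives $\wtd\omega_{p^2}(\theta^p f)=k+2p+p(p-1)$. Strictly, \eqref{eq:factfilt_nonarch} is stated for reductions of forms of the same weight. We handle this as in the earlier propositions: after multiplying by suitable powers of $E_{p-1}$, every term is congruent modulo $p^2$ to a modular form of weight $k+2p+2p(p-1)$, using \eqref{eq:e2_congruence} for $E_2$. We then group all the non-dominant terms into a single form $g$ with $\wtd\omega_{p^2}(g)<k+2p+p(p-1)$ and apply the non-archimedean property once.

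The main point to check carefully is the $p$-adic valuation of the coefficients. We must confirm that no term other than $j=0$ carries only one factor of $p$ together with a large power of $E_2$. In particular, the terms with $0<j\le p-k$ must really vanish modulo~$p^2$. Otherwise a term such as $f_1E_2^{p-1}$ carrying a single $p$ would have factor filtration $k+2p+(p-1)^2$, which is still smaller than the claimed value, so the argument would survive in that case as well. The situation is unlike Proposition~\ref{prop:before_second_low_point_filtrations}, where a mod-$p$ recombination into $\theta^{i-1}f$ was needed. Here the $j=0$ term dominates by itself, because $p\isdiv p$ gives $fE_2^p$ low factor filtration (the second case of Lemma~\ref{lem:E2_powers}, with the $p$ in front).
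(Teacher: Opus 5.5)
Your proposal is correct and follows the paper's proof essentially verbatim. You use the same valuation analysis of $\binom{p}{j}\frac{(p+k-1)!}{(j+k-1)!}$ to reduce $\theta^p f$ modulo $p^2$ to three pieces: the $j=0$ term, the terms $p-k+1\le j\le p-1$ each carrying one factor of $p$, and $f_p$. As in the paper, the $j=0$ term then strictly dominates with factor filtration $k+p(p+1)$.

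One small point concerns only your closing heuristic, not the proof. The $j=0$ value comes from the first formula of Lemma~\ref{lem:E2_powers}, which does not depend on whether $p\mid n$. What actually distinguishes this case from Proposition~\ref{prop:before_second_low_point_filtrations} is that the extra $p$ in $\binom{p}{j}$ kills the high-filtration part of $E_2^{p-j}$ for $j\ge p-k+1$.
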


\begin{proof}
The coefficients in formula~\eqref{eq:thetai_e2_power_expansion} satisfy
\begin{align*}
  p^2
\Bigisdivexact
  \mbinom{p}{j}\,
  \mfrac{(p+k-1)!}{(j+k-1)!}
\tx{,}\qquad
&
  \tx{if } 1 \leq j \leq p-k\tx{;}
\\
  p
\Bigisdivexact
  \mbinom{p}{j}\,
  \mfrac{(p+k-1)!}{(j+k-1)!}
\tx{,}\qquad
&
  \tx{if } j = 0 \tx{ or } p-k+1 \leq j \leq p-1
\tx{.}
\end{align*}
It follows that for some $p$\nbd{}adic units~$\lambda_0$ and~$\lambda_j$ we have 
\begin{gather}
\label{eq:prop:second_low_point_filtration:thetapf}
  \theta^p\, f
\equiv
  p \lambda_0\, f_0 E_2^p
  +
  p \td{f}_1
  +
  f_p\pmod{p^2}
\tx{,}\quad\tx{where }
  \td{f}_1
\defeq
  \sum_{j=p-k+1}^{p-1}
  \lambda_j f_j\, E_2^{p-j}
\tx{.}
\end{gather}
We have~$\wtd\omega_{p^2}(f_p) \leq k+2p$.
For the terms appearing in~$\td{f}_1$ we have 
\begin{gather*}
  \omega_{p}\big( f_j\, E_2^{p-j} \big)
\leq
  k+2j+(p-j)(p+1)
=
  k+p(p+1)-j(p-1)
\tx{,}
\end{gather*}
from which 
\begin{gather*}
  \wtd\omega_{p^2}( p \td{f}_1 )
\le
  k + p(p+1) - (p-k+1)(p-1)
=
  k + 2p + (k-1)(p-1)
\tx{.}
\end{gather*}
Finally, we have
\begin{gather*}
\wtd\omega_{p^2}\big(
  p \lambda_0 f_0 \, E_2^p
  \big) 
=
\omega_p\big(
   f_0 \, E_2^p
  \big) 
  =
  k + p (p+1)
=
  k + 2p + p (p-1)
\tx{.}
\end{gather*}
Since this exceeds the other filtrations it determines the factor filtration of~$\wtd\omega_{p^2}(\theta^p\, f)$.
\end{proof}

\begin{proof}[Proof of Theorem~\ref{mainthm:first_p_interval_weight_filtration}]
Once the factor filtration of $\theta^if$ is known, the weight filtration is determined using \eqref{eq:filtcong} and 
Lemma~\ref{la:weight_from_factor_filtration}. For example, in the range~$0 < i < p - k + 1$ we have shown that the factor filtration 
of $\wtd\omega_{p^2}(\theta^i f)$
is
$k + 2i + (i + p + 1) (p - 1)$.  Since $p<i + p+ 1 < 2p$, we conclude that the weight filtration must be~$k + 2i + 2p (p - 1)$.
The analysis is similar in the other ranges and we omit the details.
\end{proof}

\section{Filtrations in the first part of each \texpdf{$p$}{p}-interval}%
\label{sec:first_half_p_interval}

In this section, we prove Theorem~\ref{mainthm:first_half_of_p_intervals_weight_filtration} and Corollaries~\ref{maincor:first_p_interval_weight_filtration_low_points} and~\ref{maincor:first_half_of_p_intervals_weight_filtration_low_points}.
We will always write 
\begin{gather}\label{eq:iprimedef}
  i=np+i'
\tx{,}\qquad
  0<n<p
\tx{,}\quad
  0<i'<p
\tx{.}
\end{gather}
Recall the definition \eqref{eq:def_exceptional} of an exceptional index $i$.
The results will follow from 
Theorem~\ref{thm:first_half_p_interval_low_point_filtrations}, which gives exact values and bounds for factor filtrations when $i'<p-k+1$.  

\begin{theorem}%
\label{thm:first_half_p_interval_low_point_filtrations}
Suppose that $f\in \rmM_k$ with~$0<k<p$ has
$\omega_p(f)=k$, that $i$ and $i'$ are as in~\eqref{eq:iprimedef}, and that $i'<p-k+1$.

\begin{enumeratearabic}
\item 
\label{it:thm:first_half_p_interval_low_point_filtrations:equality}
If\/~$i' \le (p - k + 1) - n$ and $i$ is not exceptional, then we have 
\begin{gather*}
  \wtd\omega_{p^2}\big( \theta^i\, f \big)
=
  k + 2 i + (i' + p - n + 1) (p-1)\tx{.}
\end{gather*}
(Note that this includes all non-exceptional $i$ with~$p < i < 2p - k + 1$.)

\item 
\label{it:thm:first_half_p_interval_low_point_filtrations:bound}
If\/~$i' > (p - k + 1) - n$ or if $i$ is exceptional, then we have the bounds
\begin{gather*}
  \td\omega_{p^2}\big( \theta^i\, f \big)
\le
\begin{cases}
  k + 2i + (i'+k-1) (p-1)
\tx{,}
\quad
  &\tx{if } n = 1
\tx{;}
\\
  k + 2i + (i'+k-2) (p-1)
\tx{,}
\quad
&\tx{if } n > 1
\tx{.}
\end{cases}
\end{gather*}
\end{enumeratearabic}
\end{theorem}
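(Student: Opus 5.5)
The plan is to work, as in Section~\ref{sec:first_p_interval}, with the expansion~\eqref{eq:thetai_e2_power_expansion} of $\theta^i f$, writing $i=np+i'$ and $C_j\defeq\binom{i}{j}\frac{(i+k-1)!}{(j+k-1)!}$. First I would compute $v_p(C_j)$. Since $i'<p-k+1$ we have $i+k-1=np+i'+k-1<(n+1)p$. Hence $(i+k-1)!/(j+k-1)!$ contains the multiples $mp$ with $j+k-1<mp\le np$. By Lucas' theorem $p\isdiv\binom{i}{j}$ whenever the last digit $j'$ of $j$ exceeds $i'$. From this I expect:
\begin{itemize}
\item $C_j$ is a $p$-adic unit exactly when $j=np+j'$ with $0\le j'\le i'$;
\item $p\Bigisdivexact C_j$ for $j$ in the previous $p$-block, i.e.\ $(n-1)p\le j<np$ (with a case split on the Lucas digit);
\item $p^2\isdiv C_j$ for all smaller $j$, so these terms can be discarded.
\end{itemize}
So modulo~$p^2$ the expansion reduces to two groups: a unit group $U=\sum_{j'=0}^{i'}C_{np+j'}f_{np+j'}(E_2/12)^{i'-j'}$ and a group $pV$ coming from the block $(n-1)p\le j<np$.

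For $U$, I would insert the congruence~\eqref{eq:e2_power_congruence} for $E_2^{m}$ with $m=i'-j'<p$. The ``main'' parts $(12\partial E_{p-1})^mE_{p-1}^{m(2p-1)}$ have factor filtration at most $k+2i+m(p-1)\le k+2i+i'(p-1)$, which is below the target. The correction parts $p\,m\,E_{p+1}^{m+p-1}E_{p-1}^{(\cdot)}$ assemble into $p$ times a form whose reduction modulo~$p$ is, exactly as for $\td f_2$ in Proposition~\ref{prop:before_second_low_point_filtrations}, $E_{p+1}^{p}$ times a twisted $\theta$-type expression in the $f_{np+j'}$. For $pV$ I would likewise reduce modulo~$p$, using $E_2\equiv E_{p+1}$. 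In both cases I need the mod-$p$ identities for $f_j$, namely that $\wht\partial^{j}$ behaves modulo~$p$ like an operator of period $p$ (by the recursion~\eqref{eq:def:modified_serre_derivative}, $f_{np+j'}$ is congruent modulo~$p$ to a combination of $E_4$-multiples of the $f_{j'}$ and $\theta$-iterates). This lets me rewrite $U$ and $V$ modulo~$p$ in terms of $f$, $\theta$ and powers of $E_{p+1}$ and $E_4$. The goal of this step is an identity
\begin{gather*}
  \theta^i f\equiv p\,c\,E_{p+1}^{\,p-n+1}\,G+(\text{terms of lower factor filtration})\pmod{p^2},
\end{gather*}
where $c\equiv i^2+(k-1)i-n^2\pmod p$ (this is where the exceptional congruence~\eqref{eq:def_exceptional} should appear) and $G$ is essentially $f_{i'}$ or $\theta^{i'}f$ modulo~$p$. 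For $i'\le p-k+1-n$, Proposition~\ref{prop:theta_cycle_modp} together with~\eqref{eq:e2powermodp} should then give $\omega_p(E_{p+1}^{p-n+1}G)=k+2i'+\dots$, so that $\wtd\omega_{p^2}(pcE_{p+1}^{p-n+1}G)=k+2i+(i'+p-n+1)(p-1)$ when $c\not\equiv0$.

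Part~\ref{it:thm:first_half_p_interval_low_point_filtrations:equality} then follows from~\eqref{eq:factfilt_nonarch} and~\eqref{eq:filt_omegaf}, once I check that every leftover term (the main parts of $U$, and the non-leading members of $pV$) has factor filtration strictly smaller. Here the hypothesis $i'\le p-k+1-n$ is what keeps the leading term's weight $k+2i'+\dots$ below the first low point of the mod-$p$ cycle, so that no drop occurs.

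For part~\ref{it:thm:first_half_p_interval_low_point_filtrations:bound}, either $c\equiv0\pmod p$ (exceptional $i$) or $i'>p-k+1-n$. In the first case the leading term vanishes; in the second its mod-$p$ filtration drops past the low point, as in Proposition~\ref{prop:theta_cycle_modp}. In both cases I would bound each remaining term termwise, as in Lemma~\ref{la:naive_upper_bounds_cycle_filtrations}, to get $k+2i+(i'+k-1)(p-1)$. For $n>1$ one extra factor should be saved, giving $(i'+k-2)(p-1)$, because the contributing block $(n-1)p\le j<np$ then lies entirely past the first $p$-interval, where Proposition~\ref{prop:theta_cycle_modp} gives smaller mod-$p$ filtrations by one step.

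The main obstacle is the identification step: computing the mod-$p$ reduction of the combined coefficients of $U$ and $V$ precisely enough to extract the constant $c$ and to see that the surviving form is $E_{p+1}^{p-n+1}G$ with known $\omega_p$. I would attack it first by reducing the recursion~\eqref{eq:def:modified_serre_derivative} modulo~$p$ to express $f_{np+j'}$ through $f_{j'}$. I would then check the resulting binomial sums against the case $n=0$, i.e.\ against the computation for $\td f_2$ in Proposition~\ref{prop:before_second_low_point_filtrations}.
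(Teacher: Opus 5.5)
\paragraph{A gap at the identification step.} Splitting \eqref{eq:thetai_e2_power_expansion} into the unit block $np\le j\le i$ and a $p$-divisible block below $np$ is the right move, and it is the paper's decomposition. But you leave the decisive step as an ``obstacle'', and the identity you aim for there is false. The surviving term is not $c\,E_{p+1}^{p-n+1}G$ with $G\approx\theta^{i'}f$. It is
\begin{gather*}
  \tfrac{1}{12}\big(i^2+(k-1)i-n^2\big)\,E_{p+1}^{p}\,\theta^{i'+n-1}f \pmod p.
\end{gather*}
For $n>1$ your candidate cannot give the theorem. We have $\omega_p\big(E_{p+1}^{p-n+1}\theta^{i'}f\big)=k+(i'+p-n+1)(p+1)$. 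The value to be proved is $k+2i+(i'+p-n+1)(p-1)=k+(i'+p+n-1)(p+1)$, so the two differ by $2(n-1)(p+1)$; in general the weights do not even agree modulo $p-1$. Your own treatment of the $E_2$-correction terms of the unit block already produces $E_{p+1}^p$, not $E_{p+1}^{p-n+1}$. The candidate also cannot explain the hypotheses. Since $i'<p-k+1$, $\theta^{i'}f$ never reaches the first low point of the mod-$p$ cycle. So neither the condition $i'\le p-k+1-n$ in part~(1) nor the ``drop past the low point'' in part~(2) follows from your setup. Both are really statements about $\theta^{i'+n-1}f$, namely $i'+n-1<p-k+1$ versus $i'+n-1\ge p-k+1$.

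\paragraph{The missing idea.} Recognize each contributing group, modulo $p$, as $E_{p+1}^p$ times a truncated $E_2$-expansion of a theta iterate. You do not need any relation between $f_{np+j'}$ and $f_{j'}$ via the recursion~\eqref{eq:def:modified_serre_derivative}. The block below $np$ must be split further: it is not true that $p\Vert C_j$ on all of $(n-1)p\le j<np$, since $p^2\mid C_j$ for $(n-1)p+i'<j<np-k+1$.
\begin{itemize}
\item On $(n-1)p\le j\le i-p$, the factor $np$ and Wilson's theorem give $\frac{(i+k-1)!}{p\,(j+k-1)!}\equiv -n\frac{(i-p+k-1)!}{(j+k-1)!}$. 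Lucas gives $\binom{i}{j}\equiv n\binom{i-p}{j}$. So this group is $\equiv-\frac{n^2}{12}E_{p+1}^p\theta^{i-p}f$.
\item Using $(i-j)\binom{i}{j}=i\binom{i-1}{j}$, the $E_2$-correction terms of the unit block give $\frac{i(i+k-1)}{12}E_{p+1}^p\theta^{i-1}f$.
\item Since $\theta^{i-p}f\equiv\theta^{i-1}f\equiv\theta^{i'+n-1}f\pmod p$, the two combine into the displayed term. This is where the exceptional polynomial comes from.
\end{itemize}

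\paragraph{Part (2).} The displayed term either vanishes ($i$ exceptional), or, when $i'+n-1\ge p-k+1$, $p$ times it has factor filtration at most $k+2i+(i'+k-n)(p-1)$. The bounds then come from the group $np-k+1\le j\le np-1$ together with the main parts of the unit block. Every term there is at most $k+2i+(i'+k-2)(p-1)$, except $j=np-k+1$. For that term, $f_{np-k+1}\equiv\theta^{np-k+1}f\pmod p$, and Proposition~\ref{prop:theta_cycle_modp} puts $\omega_p$ at least $(n-1)(p-1)$ below the trivial bound $k+2(np-k+1)$. This single term, not a general property of the whole block $(n-1)p\le j<np$, is the source of the extra saving when $n>1$.
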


\begin{proof}
Analyzing the factorials (using for example Lucas' theorem for the binomial coefficients), we find that
\begin{gather}
\label{eq:prop:prf:first_half_p_interval_low_point_filtrations:binomial_factorial}
\begin{gathered}
  p \Bigisdiv \mfrac{(i+k-1)!}{(j + k - 1)!}
\tx{,}\quad
  \tx{if } j < np - k + 1
\tx{;}\qquad
  p^2 \Bigisdiv \mfrac{(i+k-1)!}{(j + k - 1)!}
\tx{,}\quad
  \tx{if } j < (n-1) p - k + 1
\tx{;}\\
  p \Bigisdiv \mbinom{i}{j}
\tx{,}\quad
  \tx{if } (n-1)p + i' < j < np
  \quad\tx{or}\quad (n-2)p + i' < j < (n-1)p
\tx{.}
\end{gathered}
\end{gather}
Together with~\eqref{eq:e2_power_congruence}, \eqref{eq:thetai_e2_power_expansion}, and the inequality $i'<p-k+1$, this gives
\begin{gather}
\label{eq:thetai_f1f2f3f4} 
  \theta^i\, f
\equiv
  p \td{f}_1
  +
  p \td{f}_2
  +
  p \td{f}_3
  +
  \td{f}_4
  \quad\pmod{p^2}
\end{gather}
with weight~$k + 2i + 2 (i-j) p (p-1)$ modular forms
\begin{align*}
  \td{f}_1
&\defeq
  \sum_{j = (n-1)p}^{(n-1)p+i'}
  \mfrac{1}{12^{i-j}}\,
  \mbinom{i}{j}\, \mfrac{(i+k-1)!}{p\,  (j+k-1)!}\,
  f_j\, E_{p+1}^{i-j}\;
  E_{p-1}^{(i-j)(2p-1)}
\tx{,}
\\
  \td{f}_2
&\defeq
  \sum_{j = np-k+1}^{np-1}
  \mfrac{1}{12^{i-j}}\,
  \mbinom{i}{j}\, \mfrac{(i+k-1)!}{p\, (j+k-1)!}\,
  f_j\, E_{p+1}^{i-j}\;
  E_{p-1}^{(i-j)(2p-1)}
\tx{,}
\\
  \td{f}_3
&\defeq
  \sum_{j = np}^i
  \mfrac{i-j}{12^{i-j}}\,
  \mbinom{i}{j}\, \mfrac{(i+k-1)!}{(j+k-1)!}\,
  f_j \, E_{p+1}^{i-j+p-1}\;E_{p-1}^{(i-j-1)(2p-1)+p-2}
\tx{,}
\\
  \td{f}_4
&\defeq
  \sum_{j = np}^i
  \mbinom{i}{j}\, \mfrac{(i+k-1)!}{(j+k-1)!}\,
  f_j \, \big( \partial E_{p-1} \big)^{i-j}\;
  E_{p-1}^{(i-j)(2p-1)}
\tx{.}
\end{align*}

For~$\td{f}_4$ we have the factor filtration bound
\begin{gather}
\label{eq:prop:prf:first_half_p_interval_low_point_filtrations:f4_bounds}
  \wtd\omega_{p^2}(\td{f}_4)
\le
  \max\big\{
  k + 2j + (i-j)(p+1)
  \condsep
  np \le j \le i
  \big\}
=
  k + 2i + i' (p-1)
\tx{.}
\end{gather}

Recall that~$\wtd\omega_{p^2}(p \td{f}_2)=\omega_p(\td{f}_2)$. 
We first estimate the contribution to the filtration from the term $j = np - k + 1$.
From~\eqref{eq:thetai_e2_power_expansion} we obtain
\begin{gather*}
  f_{np - k + 1}
\equiv
  \theta^{np - k + 1}\, f
\equiv
  \theta^{n-1 + p - k + 1}\, f
  \quad\pmod{p}
\tx{.}
\end{gather*}

If $0< n< k$ then we have $p-k+1\le n-1 + p - k + 1<p$, and Proposition~\ref{prop:theta_cycle_modp} gives (with $j = np - k + 1$)
\begin{gather}\label{eq:large_fj_est}
\begin{aligned}
  \omega_p\big(
  f_j\, E_{p+1}^{i-j}
  \big)
\le{}&
  k + (n - 1 + p-k+1)(p+1) - (p-k+1) (p-1) + (i-j) (p+1)
\\
={}&
  k + 2i + (i' +k- n)(p-1)
\tx{.}
\end{aligned}
\end{gather}
If however~$k\leq n<p$ we have~$f_{np - k + 1} \equiv \theta^{n - k + 1}\, f \,\pmod p$, so Proposition~\ref{prop:theta_cycle_modp} gives (again with $j = np - k + 1$)
\begin{gather*}
  \omega_p\big( f_j\, E_{p+1}^{i-j} \big) \le k+(n-k+1)(p+1)+(i-j)(p+1)
  \tx{,}
\end{gather*}
which is less than the bound in~\eqref{eq:large_fj_est}.
So the contribution from the term with $j=np-k+1$ is bounded by~\eqref{eq:large_fj_est}.

For the terms with~$np - k + 2 \le j \le np - 1$, each term contributing to~$\td{f}_2$ has 
\begin{gather}
\label{eq:second_est}
\begin{aligned}
  \omega_p\big(
  f_j\, E_{p+1}^{i-j}
  \big)
&\le
  k + 2j + (i-j) (p+1)
=
  k + 2i + (i-j) (p-1)
\\
&\le 
  k + 2i + (i'+k-2) (p-1)
\tx{.}
\end{aligned}
\end{gather}
The estimate~\eqref{eq:large_fj_est} is larger than the estimate~\eqref{eq:second_est} if and only if $n=1$.
Therefore we obtain 
\begin{gather}
\label{eq:prop:prf:first_half_p_interval_low_point_filtrations:f2_bounds}
  \td\omega_{p^2}(p \td{f}_2)
\le
\begin{cases}
  k + 2i + (i'+k-1) (p-1)
\tx{,}
\quad
&
  \tx{if } n = 1
\tx{;}
\\
  k + 2i + (i'+k-2) (p-1)
\tx{,}
\quad
&
  \tx{if } n > 1
\tx{.}
\end{cases}
\end{gather}

Next we inspect~$\td{f}_1$ and~$\td{f}_3$. We omit powers of~$E_{p-1}$ and hence work with congruences involving modular forms of different weights. We employ Wilson's theorem, Lucas's theorem, and the fact that $i'<p-k+1$ to find that
\begin{align*}
  \td{f}_1
&\equiv
  \sum_{j = (n-1)p}^{i-p}
  \mfrac{1}{12^{i-j}}\,
  \mbinom{i}{j}\, \mfrac{(i+k-1)!}{p\, (j+k-1)!}\,
  f_j\, E_{p+1}^{i-j}
\\
&\equiv
  \mfrac{-n}{12^p} E_{p+1}^p\,
  \sum_{j = (n-1)p}^{i-p}
  \mfrac{1}{12^{i-p-j}}\,
  \mbinom{i}{j}\, \mfrac{(i-p+k-1)!}{(j+k-1)!}\,
  f_j\, E_{p+1}^{i-p-j}
\\
&\equiv
  \mfrac{-n^2}{12^p} E_{p+1}^p\,
  \sum_{j = (n-1)p}^{i-p}
  \mfrac{1}{12^{i-p-j}}\,
  \mbinom{i-p}{j}\, \mfrac{(i-p+k-1)!}{(j+k-1)!}\,
  f_j\, E_{p+1}^{i-p-j}
  \quad\pmod{p}
  \tx{.}
\end{align*}
Arguing as in \eqref{eq:prop:prf:first_half_p_interval_low_point_filtrations:binomial_factorial}
and using \eqref{eq:thetai_e2_power_expansion} gives
\begin{gather*}
  \td{f}_1\equiv \mfrac{-n^2}{12} E_{p+1}^p\,
  \theta^{i-p}\, f
\equiv
  \mfrac{-n^2}{12} E_{p+1}^p\,
  \theta^{i'+n-1}\, f
  \quad\pmod{p}\tx{.}
\end{gather*}
For~$\td{f}_3$ we observe that there is no contribution from~$j = i$ and use the identity~$(i-j) \binom{i}{j} = i \binom{i-1}{j}$  to obtain
\begin{align*}
  \td{f}_3
& \equiv
  \mfrac{i(i+k-1)}{12}\,
  E_{p+1}^p\,
  \sum_{j = np}^{i-1}
  \mfrac{1}{12^{i-1-j}}\,
  \mbinom{i-1}{j}\, \mfrac{(i-1+k-1)!}{(j+k-1)!}\,
  f_j \, E_{p+1}^{i-1-j}
\\
&\equiv
  \mfrac{i(i+k-1)}{12} E_{p+1}^p\,
  \theta^{i-1}\, f
\equiv
  \mfrac{i(i+k-1)}{12} E_{p+1}^p\,
  \theta^{i'+n-1}\, f
  \quad\pmod{p}
\tx{.}
\end{align*}
We conclude that
\begin{gather}\label{eq:prop:prf:first_half_p_interval_low_point_filtrations:f1_f3_bounds}
    \begin{aligned}
  \td{f}_1 + \td{f}_3
&\equiv
  \mfrac{-n^2}{12} E_{p+1}^p\,
  \theta^{i'+n-1}\, f
  +
  \mfrac{i(i+k-1)}{12} E_{p+1}^p\,
  \theta^{i'+n-1}\, f
\\
&\equiv
  \tfrac{1}{12}
  \big( i^2 + (k-1) i - n^2 \big)\,
  E_{p+1}^p\,
  \theta^{i'+n-1}\, f
  \quad\pmod{p}
\tx{.}
\end{aligned}
\end{gather}

From \eqref{eq:prop:prf:first_half_p_interval_low_point_filtrations:f2_bounds} and \eqref{eq:prop:prf:first_half_p_interval_low_point_filtrations:f4_bounds}  we have
\begin{align}
\label{eq:f2f4filt}
  \wtd{\omega}_{p^2}\big( p \td{f}_2 + \td{f}_4 \big)
  \le{}
\begin{cases}
  k + 2i + (i'+k-1) (p-1)
\tx{,}
\quad
&
  \tx{if } n = 1
\tx{;}
\\
  k + 2i + (i'+k-2) (p-1)
\tx{,}
\quad
&
  \tx{if } n > 1
\tx{.}
\end{cases}
\end{align}
If $i$ is exceptional (i.e. $i^2 + (k-1) i - n^2 \equiv 0 \,\pmod{p}$)
then the term \eqref{eq:prop:prf:first_half_p_interval_low_point_filtrations:f1_f3_bounds} vanishes modulo $p$.  
Together with~\eqref{eq:thetai_f1f2f3f4}  and \eqref{eq:f2f4filt}  this
gives the claimed bounds in part~\ref{it:thm:first_half_p_interval_low_point_filtrations:bound} of the theorem.

Now assume that~$i$ is not exceptional. If~$f$ is ordinary at~$p$, then Proposition~\ref{prop:theta_cycle_modp} gives
\begin{gather*}
  \omega_p\big( \theta^{i'+n-1}\, f \big)=
   \begin{cases}
  k + 2i + (i'-n-1)(p-1) - 2
\tx{,}\quad
&
  \tx{if } i' + n - 1 < p - k + 1
\tx{;}
\\
   k + 2i + (i'-n-p+k-2)(p-1) - 2
\tx{,}\quad
&
  \tx{if } p - k + 1 \le i' + n - 1 < p
\tx{.}
\end{cases}
\end{gather*}
Furthermore, when $p \le i' + n - 1$  
the same proposition (note that $i'+n-p<p-k+1$)
gives
 \begin{gather*}
 \omega_p\big( \theta^{i'+n-1}\, f \big)=\omega_p\big( \theta^{i'+n-p}\, f \big)
=  k + 2i + (i'-n-p-2)(p-1) - 2\tx{.}
\end{gather*}
Adding~$p (p + 1) = (p+2)(p-1) + 2$ in each case gives
\begin{gather*}
  \wtd\omega_{p^2}\left(p(\td{f}_1 + \td{f}_3)\right)=
  \begin{cases}
  k + 2i + (i'+p-n+1)(p-1)
\tx{,}\quad
&
  \tx{if } i' + n -1 < p - k + 1
\tx{;}
\\
  k + 2i + (i'-n+k)(p-1)
\tx{,}\quad
&
  \tx{if } p - k + 1 \le i' + n - 1 < p
\tx{;}
\\
  k + 2i + (i'-n)(p-1)
\tx{,}\quad
&
  \tx{if } p \le i' + n - 1 
\tx{.}
\end{cases}
\end{gather*}
Similarly, if~$f$ is non-ordinary at~$p$, then we obtain
\begin{gather*}
  \wtd\omega_{p^2}\left(p(\td{f}_1 + \td{f}_3)\right)=
  \begin{cases}
  k + 2i + (i'+p-n+1)(p-1)
\tx{,}\quad
&
  \tx{if } i' + n -1 < p - k + 1
\tx{;}
\\
  k + 2i + (i'-n+k-1)(p-1)
\tx{,}\quad
&
  \tx{if } p - k + 1 \le i' + n - 1 < p-1
\tx{;}
\\
  k + 2i + (i'-n)(p-1)
\tx{,}\quad
&
  \tx{if } p-1 \le i' + n - 1 
\tx{.}
\end{cases}
\end{gather*}

If~$i' + n - 1 < p - k + 1$ then $i'+k-1<p-n+1$ and the bound in \eqref{eq:f2f4filt} is strictly less 
than the values above, 
and we find that 
\begin{gather*}
  \wtd{\omega}_{p^2}\big( \theta^i\, f \big)
=
  \wtd\omega_{p^2}\big( p( \td{f}_1 +  \td{f}_3) \big)\,
=
  k + 2i + (i'+p-n+1)(p-1)
\tx{,}
\end{gather*}
which confirms part~\ref{it:thm:first_half_p_interval_low_point_filtrations:equality} of the theorem.

It remains to consider the cases when 
$i$ is not exceptional and
$i' + n - 1 \geq p - k + 1$.
In these cases the factor filtration of~$p(\td{f}_1 + \td{f}_3)$ does not dominate the factor filtration in 
\eqref{eq:f2f4filt},
and we can deduce only that
\begin{gather*}
  \td\omega_{p^2}\big( \theta^i\, f \big)
\le
\begin{cases}
  k + 2i + (i'+k-1) (p-1)
\tx{,}
\quad
&
  \tx{if } n = 1
\tx{;}
\\
  k + 2i + (i'+k-2) (p-1)
\tx{,}
\quad
&
  \tx{if } n > 1
\tx{.}
\end{cases}
\end{gather*}
This completes the proof of the theorem.
\end{proof}

\begin{proof}[Proof of Theorem~\ref{mainthm:first_half_of_p_intervals_weight_filtration}]
As mentioned above,the statements when $i'=0$ and $i'=p-k+1$ are a consequence of~\cite[Theorem 1.8]{kim-lee-2023}. 
For the rest we use Theorem~\ref{thm:first_half_p_interval_low_point_filtrations}, Lemma~\ref{la:weight_from_factor_filtration}, and \eqref{eq:filtcong}.

By assumption we have $4<i'+k<p+1$. In the cases covered by part~\ref{it:thm:first_half_p_interval_low_point_filtrations:bound} of Theorem~\ref{thm:first_half_p_interval_low_point_filtrations} this gives the weight filtration bound $k+2i+p(p-1)$.
In the cases covered by part~\ref{it:thm:first_half_p_interval_low_point_filtrations:equality}
we see that 
\begin{align*}
  0<i'+p-n+1<p+1
\tx{,}
&\quad
  \tx{if $0<i'<n$ and $i'\leq p-k+1-n$}
\tx{;}
\\
  p<i'+p-n+1<2p
\tx{,}
&\quad
  \tx{if $n\leq i'\leq p-k+1-n$}
\tx{,}
\end{align*}
from which the exact weight filtrations follow.
\end{proof}

\begin{proof}[Proof of Corollary~\ref{maincor:first_p_interval_weight_filtration_low_points}]
To show that the second low point occurs at index $p$ 
we use Theorems~\ref{mainthm:first_p_interval_weight_filtration} and~\ref{mainthm:first_half_of_p_intervals_weight_filtration} together with the fact that~$p+1$ is not an exceptional index.
All other assertions follow immediately from 
Theorem~\ref{mainthm:first_p_interval_weight_filtration}. 
\end{proof}

\begin{proof}[Proof of Corollary~\ref{maincor:first_half_of_p_intervals_weight_filtration_low_points}]
Part~\ref{it:maincor:first_half_of_p_intervals_weight_filtration_low_points:rises} follows from Theorem~\ref{mainthm:first_half_of_p_intervals_weight_filtration} since the hypotheses ensure that~$i$ and~$i+1$ are both contained in one of the two ranges where the filtrations are exactly determined.
For part~\ref{it:maincor:first_half_of_p_intervals_weight_filtration_low_points:non_low_points} 
note that under the 
hypotheses the filtration at~$i$ is exactly determined by the theorem,
while the filtration at~$i-1$ is bounded to be less than the filtration at~$i$.

 To prove part~\ref{it:maincor:first_half_of_p_intervals_weight_filtration_low_points:regular_low_points},  
let~$i =  np+p - k + 2 - n$ be as in the statement.
Using both parts of Theorem~\ref{mainthm:first_half_of_p_intervals_weight_filtration} we see that we always have $\omega_{p^2}(\theta^i f)\leq k + 2i + p(p-1)$.
Since $2n \le p-k+1$ by hypothesis and~$k \ge 4$ it follows that 
\begin{gather}\label{eq:theta_i_upper_bound}
    k + 2i - p(p-1)\leq (4-k)(p+1)-2n<0
    \tx\,
\end{gather}    
    from which we conclude using \eqref{eq:filtcong} that 
\begin{gather*}
    \omega_{p^2}(\theta^i f)=k+2i\quad\text{or}\quad k + 2i + p(p-1)\tx{.}
\end{gather*}
The index $i-1$ is not exceptional since
\begin{gather*}
  (i-1)^2 + (k-1)(i-1) - n^2 \equiv n(k-1)\not\equiv 0
  \quad\pmod{p}
\tx{.}
\end{gather*}
Since the hypotheses give
$n \leq i'-1 = p-k+1-n$,
 it follows from Theorem~\ref{mainthm:first_half_of_p_intervals_weight_filtration} that 
\begin{gather*}
 \omega_{p^2}(\theta^{i-1} f) = k + 2(i-1) + 2p(p-1)
\tx{.}
\end{gather*}
From \eqref{eq:theta_i_upper_bound} we have 
$k+2(i+1)-p(p-1)\leq 0$.   Note that $\omega_{p^2}(\theta^{i+1}\, f)\neq0$ (equality would imply that $\theta^{i+2}\, f\equiv 0\pmod p$ which contradicts Proposition~\ref{prop:theta_cycle_modp}).
It follows that 
\begin{gather*}
  \omega_{p^2}(\theta^{i+1} f)
\geq
  k+2(i+1)
\tx{.}
\end{gather*}

If $\omega_{p^2}(\theta^{i} \,f)=k+2i$ or $\omega_{p^2}(\theta^{i+1}\, f)\geq k + 2(i+1) + p(p-1)$ then there is a low point at $i$ as claimed.
In the remaining case we have $\omega_{p^2}(\theta^{i} \,f)=k+2i+p(p-1)$ and $\omega_{p^2}(\theta^{i+1}\, f)= k + 2(i+1)$.
By \eqref{eq:theta_i_upper_bound} we have 
$k+2(i+2)-p(p-1)\leq 2$.
Since we cannot have
$\omega_{p^2}(\theta^{i+2}\, f)=0$ or $2$, it follows that 
$\omega_{p^2}(\theta^{i+2} f)\geq k+2(i+2)$, which establishes that there is a low point at $i+1$ in this case.

We turn to the proof of part~\ref{it:maincor:first_half_of_p_intervals_weight_filtration_low_points:exceptional_low_points}. We must show that there is a low point at $i$ whenever $i$ is an exceptional index and 
$n \le i' \le p - k + 1 - n$. It is straightforward to check that indices with~$i'=n$ or~$i'=p-k+1-n$ are not exceptional.
It follows that
\begin{gather*}
    n\leq i'-1< i'+1\leq p-k+1-n
    \tx{.}
\end{gather*}    
The filtration values and bounds in Theorem~\ref{mainthm:first_half_of_p_intervals_weight_filtration} show that there is a low point at $i$  unless one of $i-1$ or $i+1$ is exceptional and it
therefore suffices to show that there are no consecutive solutions~$i'$, $i'+1$ to the congruence \eqref{eq:def_exceptional}.  This follows since 
\begin{gather*}
  \big( (i'+1)^2 + (k-1)(i'+1) - n^2 \big)
-
  \big( i'^2 + (k-1)i' - n^2 \big)
=
  2i'+ k
\tx{,}
\end{gather*}
and by the assumptions the right side is positive, even, and less than~$2p$.
\end{proof}

\begin{remark}
\label{rm:result_range}
Theorem~\ref{mainthm:first_p_interval_weight_filtration} and the results in Section~\ref{sec:first_p_interval} provide exact values for the weight and factor filtration of~$\theta^i\, f$ at~$p$ positions.
 Theorems~\ref{mainthm:first_half_of_p_intervals_weight_filtration}
and~\ref{thm:first_half_p_interval_low_point_filtrations} provide exact values for both filtrations at non-exceptional indices when~$0 < i' \le p - k + 1 - n$.  Since in each $p$-interval there are at most two exceptional indices, 
we have exact values for both filtrations of~$\theta^i\, f$ with~$0 \le i \le p(p-1)$ for at least
\begin{gather*}
   p + \sum_{n=1}^{p-k+1} (p-k-1-n)
=p+\mfrac{(p-k-4)(p-k+1)}2
\end{gather*}
positions. This is asymptotically~50\% of the~$p(p-1)$ total number of positions. Similarly, we have bounds for at least
\begin{gather*}
  p
  +
  \sum_{n = 1}^{p-1}
  (p-k+1)
=
  p
  +
  (p-k+1)(p-1)
\end{gather*}
positions, which yields asymptotically~100\%.

\end{remark}

\vspace{1.5\baselineskip}
\phantomsection
\addcontentsline{toc}{section}{References}
\markright{References}
\label{sec:references}
\sloppy
\printbibliography[heading=none]

\filbreak
\Needspace*{5\baselineskip}
\noindent%
\rule{\textwidth}{0.15em}
\\\nopagebreak

{\small\noindent
Scott Ahlgren\\\nopagebreak
Department of Mathematics\\\nopagebreak
University of Illinois\\\nopagebreak
Urbana, IL 61801, USA\\\nopagebreak
E-mail: \url{sahlgren@illinois.edu}
}\vspace{.5\baselineskip}

{\small\noindent
Martin Raum\\\nopagebreak
Chalmers tekniska högskola och G\"oteborgs Universitet\\\nopagebreak
Institutionen f\"or Matematiska vetenskaper\\\nopagebreak
SE-412 96 G\"oteborg, Sweden\\\nopagebreak
E-mail: \url{martin@raum-brothers.eu}\\\nopagebreak
Homepage: \url{https://martin.raum-brothers.eu}
}\vspace{.5\baselineskip}

{\small\noindent
Olav K. Richter\\\nopagebreak
Department of Mathematics\\\nopagebreak
University of North Texas\\\nopagebreak
Denton, TX 76203, USA\\\nopagebreak
E-mail: \url{richter@unt.edu}
}%

\ifdraft{%
\listoftodos%
}

\end{document}

